\renewcommand*{\eqref}[1]{%
	\hyperref[{#1}]{\textup{\tagform@{\ref*{#1}}}}%
}
\setlist[enumerate,1]{label={\textup{(\roman*)}}}
\theoremstyle{plain}
\newtheorem{theorem}{Theorem}[section]
\newtheorem{corollary}[theorem]{Corollary}
\newtheorem{proposition}[theorem]{Proposition}
\theoremstyle{definition}
\newtheorem{definition}[theorem]{Definition}
\numberwithin{equation}{section}
\DeclareMathOperator\supp{supp}
\def\L1loc{L^1_{\text{loc}}}
\begin{document}

\title{Absolute continuity of the (quasi)norm in rearrangement-invariant spaces}
\author{Dalimil Pe{\v s}a}

\address{Dalimil Pe{\v s}a, Technische Universität Chemnitz, Faculty of Mathematics, 09107 Chemnitz, Germany
	and
	Department of Mathematical Analysis, Faculty of Mathematics and Physics, Charles University, Sokolovsk\'a~83, 186~75 Praha~8, Czech Republic}
\email{dalimil.pesa@mathematik.tu-chemnitz.de}
\urladdr{0000-0001-6638-0913}

\subjclass[2010]{46E30, 46A16}
\keywords{Banach function space, quasi-Banach function space, rearrangement-invariant space, absolute continuity of the norm, representation theorem}

\thanks{This research was supported by the grant 23-04720S of the Czech Science Foundation.}

\begin{abstract}
This paper explores the interactions of absolute continuity of the (quasi)norm with the concepts that are fundamental in the theory of rearrangement-invariant (quasi-)Banach function spaces, such as the Luxemburg representation or the Hardy--Littlewood--P{\' o}lya relation. In order to prove our main results, we give an explicit construction of a particularly suitable representation quasinorm (which is not necessarily unique) and develop several new tools that we believe to be of independent interest. As an application of our results, we characterise the subspace of functions having absolutely continuous quasinorms in weak Marcinkiewicz spaces.
\end{abstract}

\date{\today}

\maketitle

\makeatletter
   \providecommand\@dotsep{2}
\makeatother

\section{Introduction}
Absolute continuity of the (quasi)norm is one of the fundamental concepts in the theory of (quasi-)Banach function spaces (we provide the relevant definitions in Sections~\ref{SectionFunctionNormsQuasinorms} and \ref{SecACqN}). The importance of this concept comes from two directions. 

Firstly, it is deeply connected to the functional-analytic properties of said spaces. Indeed, it is well known that a (quasi-)Banach function space $X$ (over an appropriate underlying measure space) is separable if and only if it has absolutely continuous (quasi)norm, while a Banach function space $X$ is reflexive if and only if both itself and its associate space $X'$ have this property. This is a consequence of the fact that absolute continuity of the (quasi)norm is the property that determines whether a pointwise convergence translates to convergence in the given space (in the sense of the abstract version of the Lebesgue dominated convergence theorem that we present as Proposition~\ref{PropDomConv}), as this is precisely the crux of the matter, both for separability and reflexivity. For details, see e.g.~\cite[Chapter~1, Sections~3, 4, and 5]{BennettSharpley88} and \cite[Section~3.3]{NekvindaPesa24}. 

Secondly, it has turned out to be relevant in many areas of applications. For example, it has long been known that absolute continuity of the norm is important in understanding various forms of compactness in Banach function spaces. In \cite{LuxemburgZaanen62}, it has been presented as a key component of various results about weak-type compactness. For separable Banach function spaces, it has been shown in \cite{Zabreiko66} that a set of functions is precompact if and only if it is locally precompact in measure and the functions have uniformly absolutely continuous norms (see also \cite{CaetanoGogatishvili16} for extension of this result to the class of quasi-Banach function spaces). Since the unit balls of Sobolev spaces based on Banach function spaces (over bounded Lipschitz domains) are always compact in measure (as follows easily from the Rellich--Kondrachov theorem), this characterisation has been applied with much success to studying the compactness of Sobolev embeddings in \cite{CurberaRicker07}, \cite{KermanPick08}, and \cite{Pustylnik06}. Similar principles also apply to other operators, which means that absolute continuity of the norm has been employed in characterising compactness of kernel-type integral operators in \cite{LuxemburgZaanen62} and \cite{RafeiroSamko08} and also the Hardy operator in \cite{LaiPick93} and its modifications including weights and suprema in \cite{EdmundsGurka94} and \cite{PerneckaPick13}, respectively. Another related concept are the almost compact embeddings (sometimes also called absolutely continuous embeddings) that appeared naturally in \cite{KermanPick08} as a tool that can provide compactness and have later been studied in \cite{Fernandez-MartinezManzano10} and \cite{Slavikova12}. Finally, as an example not related to compactness, we would like to include a recent characterisation of functions vanishing at the boundary in some Sobolev spaces that has been obtained in \cite{NekvindaTurcinova24}.

This paper aims to explore the interactions of absolute continuity of the (quasi)norm with the concepts that are fundamental in the theory of rearrangement-invariant (quasi-)Banach function spaces, such as the Luxemburg representation or the Hardy--Littlewood--P{\' o}lya relation. Indeed, when considering the absolute continuity of the (quasi)norm in a rearrangement-invariant space several questions naturally appear, answers to which cannot be found in the literature (to our great surprise). The two main questions we answer in this paper are:

\begin{enumerate}[label={\textup{\textbf{(\Alph*)}}}]
	\item \label{QuestionRepre} \textbf{Representation of the absolute continuity of the (quasi)norm.} Does it hold that a function $f$ has absolutely continuous (quasi)norm in a given rearrangement-invariant (quasi-)Banach function space if and only if $f^*$, the non-increasing rearrangement of $f$ (see Definition~\ref{DefNIR} for details), has the same property in the corresponding representation space?
	
	The answer to this question is positive, at least for the representation space we construct in Section~\ref{SectionRepreCorrect}; this is the content of Theorem~\ref{TheoremRepreACqN}.
	
	\item \label{QuestionOrder} \textbf{Preservation by weaker orderings.}	
	Let $X$ be a (quasi-)Banach function space and denote by $X_a$ the subspace of functions having absolutely continuous (quasi)norm. Since $X$ has the lattice property, i.e.~$\lvert f \rvert \leq \lvert g \rvert$ $\mu$-a.e.~implies $\lVert f \rVert_X \leq \lVert g \rVert_X$, it is clear that when $\lvert f \rvert \leq \lvert g \rvert$ and $g \in X_a$, then also $f \in X_a$. However, in rearrangement-invariant spaces, the (quasi)norm preserves also some weaker orderings of functions. The question thus naturally appears whether the above mentioned principle translates to those weaker orderings. Specifically:
	\begin{enumerate}
		\item Every rearrangement-invariant (quasi-)Banach function space $X$ has the property that $f^* \leq g^*$ implies $\lVert f \rVert_X \leq \lVert g \rVert_X$, see e.g.~\cite[Corollary~3.5]{MusilovaNekvinda24}. Does this mean that having $f^* \leq g^*$ and $g \in X_a$ implies $f \in X_a$?
		\item Some rearrangement-invariant quasi-Banach function spaces, including (but not limited to) all rearrangement-invariant Banach function spaces, have the property that $f \prec g$ implies $\lVert f \rVert_X \leq \lVert g \rVert_X$ (where $\prec$ denotes the Hardy--Littlewood--P{\' o}lya relation, see Definition~\ref{DefHLPR} and the following discussion for details and context). Provided that $X$ is such a space, does it mean that having $f \prec g$ and $g \in X_a$ implies $f \in X_a$?
	\end{enumerate}
	
	Answers to both of the sub-questions are again affirmative; this is the content of Corollary~\ref{CorollaryACRI} and Theorem~\ref{TheoremACHLP}, respectively.
\end{enumerate}

It is worth noting that while the answers to our questions are rather natural, their proofs are far from straightforward, especially when it comes to the interaction of the absolute continuity of the (quasi)norm with the Hardy--Littlewood--P{\' o}lya relation. Furthermore, the natural positive answer to the question \ref{QuestionRepre} is in some situations true only for some of the possible choices of the representation functional, which we find quite surprising.

Building upon the recent advances in the theory of rearrangement-invariant quasi-Banach function spaces in \cite{MusilovaNekvinda24} and \cite{NekvindaPesa24}, we were able to prove our results in this more general context.
However, our results are not covered in the existing literature even for the classical case of rearrangement-invariant Banach function spaces. This forms a rather surprising gap in the classical theory that we now fill in.

Finally, in order to answer the question \ref{QuestionOrder} we had to develop some new tools that are based on our representation result (Theorem~\ref{TheoremRepreACqN}) and that we believe to be of independent interest. To justify this claim, we present an application where we use those tools to characterise the subspace of functions having absolutely continuous quasinorms in weak Marcinkiewicz spaces. In this connection let us recall that, for a given r.i.~quasi-Banach function space, this subset is often trivial (in the sense that it either coincides with the space itself, or, conversely, contains only the zero function); however, the pivotal instance of a class of spaces for which this subspace is non-trivial is exactly that of the weak Marcinkiewicz spaces.

\section{Preliminaries}

The purpose of this section is to establish the theoretical background that serves as the foundation for our research. The definitions and notation is intended to be as standard as possible. The usual reference for most of this theory is \cite{BennettSharpley88}.

Throughout this paper we will denote by $(\mathcal{R}, \mu)$, and occasionally by $(\mathcal{S}, \nu)$, some arbitrary (totally) $\sigma$-finite measure space. Given a $\mu$-measurable set $E \subseteq \mathcal{R}$ we will denote its characteristic function by $\chi_E$. By $\mathcal{M}(\mathcal{R}, \mu)$ we will denote the set of all extended complex-valued $\mu$-measurable functions defined on $\mathcal{R}$. As is customary, we will identify functions that coincide $\mu$-almost everywhere. We will further denote by $\mathcal{M}_0(\mathcal{R}, \mu)$ and $\mathcal{M}_+(\mathcal{R}, \mu)$ the subsets of $\mathcal{M}(\mathcal{R}, \mu)$ containing, respectively, the functions finite $\mu$-almost everywhere and the non-negative functions.

When there is no risk of confusing the reader, we will abbreviate $\mu$-almost everywhere, $\mathcal{M}(\mathcal{R}, \mu)$, $\mathcal{M}_0(\mathcal{R}, \mu)$, and $\mathcal{M}_+(\mathcal{R}, \mu)$ to $\mu$-a.e., $\mathcal{M}$, $\mathcal{M}_0$, and $\mathcal{M}_+$, respectively.

By a support of a given function $f \in \mathcal{M}(\mathcal{R}, \mu)$ we mean the set where it is non-zero, i.e.
\begin{equation*}
	\supp f =\left \{x \in \mathcal{R}; \; \lvert f \rvert> 0  \right \}.
\end{equation*}

When $X$ is a set and $f, g: X \to \mathbb{C}$ are two maps satisfying that there is some positive and finite constant $C$, depending only on $f$ and $g$, such that $\lvert f(x) \rvert \leq C \lvert g(x) \rvert$ for all $x \in X$, we will denote this by $f \lesssim g$. We will also write $f \approx g$, or sometimes say that $f$ and $g$ are equivalent, whenever both $f \lesssim g$ and $g \lesssim f$ are true at the same time. We choose this general definition because we will use the symbols ``$\lesssim$'' and ``$\approx$'' with both functions and functionals. 

When $X, Y$ are two topological linear spaces, we will denote by $Y \hookrightarrow X$ that $Y \subseteq X$ and that the identity mapping $I : Y \rightarrow X$ is continuous.

As for some special cases, we will denote by $\lambda^n$ the classical $n$-dimensional Lebesgue measure, with the exception of the $1$-dimensional case where we simply write $\lambda$. We will further denote by $m$ the counting measure over $\mathbb{N}$. When $p \in (0, \infty]$ we will denote by $L^p$ the classical Lebesgue space (of functions in $\mathcal{M}(\mathcal{R}, \mu)$), defined for finite $p$ as the set
\begin{equation*}
	L^p = \left \{ f \in \mathcal{M}(\mathcal{R}, \mu); \; \int_{\mathcal{R}} \lvert f \rvert^p \: d\mu < \infty \right \},
\end{equation*}
equipped with the customary (quasi-)norm
\begin{equation*}
	\lVert f \rVert_p = \left ( \int_ {\mathcal{R}} \lvert f \rvert^p \: d\mu \right )^{\frac{1}{p} },
\end{equation*}
and through the usual modifications for $p=\infty$. In the special case when $(\mathcal{R}, \mu) = (\mathbb{N}, m)$ we will denote this space by $l^p$. Note that in this paper we consider $0$ to be an element of $\mathbb{N}$.

\subsection{Non-increasing rearrangement} \label{SectionNon-increasingRearrangement}
We now present the crucial concept of the non-increasing rearrangement of a function and state some of its properties that will be important for our work. We proceed in accordance with \cite[Chapter~2]{BennettSharpley88}.

\begin{definition} \label{DefNIR}
	The distribution function $f_*$ of a function $f \in \mathcal{M}(\mathcal{R}, \mu)$ is defined for $s \in [0, \infty)$ by
	\begin{equation*}
		f_*(s) = \mu(\{ t \in \mathcal{R}; \; \lvert f(t) \rvert > s \}).
	\end{equation*}	
	The non-increasing rearrangement $f^*$ of the said function is then defined for $t \in [0, \infty)$ by
	\begin{equation*}
		f^*(t) = \inf \{ s \in [0, \infty); \; f_*(s) \leq t \}.
	\end{equation*}
\end{definition}

For the basic properties of the distribution function and the non-increasing rearrangement, with proofs, see \cite[Chapter~2, Proposition~1.3]{BennettSharpley88} and \cite[Chapter~2, Proposition~1.7]{BennettSharpley88}, respectively. We consider those properties to be classical and well known and we will be using them without further explicit reference.

An important concept used in the paper is that of equimeasurability.

\begin{definition} \label{DEM}
	We say that the functions $f \in \mathcal{M}(\mathcal{R}, \mu)$ and $g \in \mathcal{M}(\mathcal{S}, \nu)$ are equimeasurable if $f^* = g^*$, where the non-increasing rearrangements are computed with respect to the appropriate measures.
\end{definition}

It is not hard to show that two functions are equimeasurable if and only if their distribution functions coincide too (with each distribution function being considered with respect to the appropriate measure).

A very significant classical result is the Hardy--Littlewood inequality. For proof, see for example \cite[Chapter~2, Theorem~2.2]{BennettSharpley88}.

\begin{theorem} \label{THLI}
	It holds for all $f, g \in \mathcal{M}$ that
	\begin{equation*}
		\int_\mathcal{R} \lvert fg \rvert \: d\mu \leq \int_0^{\infty} f^*g^* \: d\lambda.
	\end{equation*}
\end{theorem}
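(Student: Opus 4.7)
The plan is to reduce the inequality to a pointwise comparison of measures of super-level sets, via the layer-cake representation $h(x) = \int_0^\infty \chi_{\{h > s\}}(x)\, ds$, valid for every $h \in \mathcal{M}_+$. Since both sides of the claimed inequality depend only on $\lvert f \rvert$ and $\lvert g \rvert$, I may assume from the outset that $f, g \in \mathcal{M}_+(\mathcal{R}, \mu)$.

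Applying the layer-cake identity to both factors simultaneously and invoking Tonelli's theorem (legitimate since all integrands are non-negative and measurable), I would rewrite
\begin{equation*}
	\int_\mathcal{R} fg \, d\mu = \int_0^\infty\!\!\int_0^\infty \mu\bigl(\{f > s\} \cap \{g > t\}\bigr) \, ds \, dt.
\end{equation*}
The very same manipulation, this time performed on the measure space $([0, \infty), \lambda)$ and applied to $f^*, g^*$, yields
\begin{equation*}
	\int_0^\infty f^* g^* \, d\lambda = \int_0^\infty\!\!\int_0^\infty \lambda\bigl(\{f^* > s\} \cap \{g^* > t\}\bigr) \, ds \, dt.
\end{equation*}

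The proof is then reduced to a pointwise (in $s, t$) comparison of the two integrands. On the right-hand side, the identity $\{f^* > s\} = [0, f_*(s))$, which follows from the definition of $f^*$ as an infimum together with the monotonicity of the distribution function, combined with the analogous identity for $g^*$, gives
\begin{equation*}
	\lambda\bigl(\{f^* > s\} \cap \{g^* > t\}\bigr) = \min\{f_*(s), g_*(t)\}.
\end{equation*}
On the left-hand side, elementary monotonicity of $\mu$ yields at once
\begin{equation*}
	\mu\bigl(\{f > s\} \cap \{g > t\}\bigr) \leq \min\{\mu(\{f > s\}), \mu(\{g > t\})\} = \min\{f_*(s), g_*(t)\}.
\end{equation*}
Integrating this pointwise inequality over $(0, \infty) \times (0, \infty)$ finishes the argument.

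The only genuinely non-trivial ingredient is the identity $\{f^* > s\} = [0, f_*(s))$, which is where the particular structure of the non-increasing rearrangement enters; the rest is a routine combination of Tonelli's theorem and monotonicity of measure. I do not anticipate a serious obstacle here, since this identity is among the basic properties of $f^*$ already referenced (without explicit citation) following Definition~\ref{DefNIR}.
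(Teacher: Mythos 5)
Your proof is correct. Note first that the paper does not prove Theorem~\ref{THLI} at all; it simply cites \cite[Chapter~2, Theorem~2.2]{BennettSharpley88}, so the relevant comparison is with that standard argument. The proof in Bennett--Sharpley is asymmetric: it first establishes the special case $g = \chi_E$, namely $\int_E \lvert f \rvert \, d\mu \leq \int_0^{\mu(E)} f^* \, d\lambda$ (via a single layer-cake decomposition of $f$ and the bound $\mu(\{\lvert f\rvert>s\}\cap E)\leq\min\{f_*(s),\mu(E)\}$), then extends to non-negative simple $g$ by writing $g^{}$ as a sum of characteristic functions of nested sets so that $g^*$ decomposes compatibly, and finally passes to general $g$ by monotone convergence and the Fatou-type continuity of the rearrangement. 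Your argument is the symmetric ``double layer-cake'' proof: decomposing both factors at once and invoking Tonelli reduces everything to the single pointwise inequality $\mu(A\cap B)\leq\min\{\mu(A),\mu(B)\}$ together with the identity $\{f^*>s\}=[0,f_*(s))$, with no approximation by simple functions and no limiting argument. All the steps you flag are sound: joint measurability of $(s,x)\mapsto\chi_{\{f>s\}}(x)$ and $\sigma$-finiteness of $(\mathcal{R},\mu)$ justify Tonelli, and the identity $\{t: f^*(t)>s\}=[0,f_*(s))$ follows directly from the definition of $f^*$ as an infimum and the monotonicity of $f_*$ (even the ambiguity between $[0,f_*(s))$ and $[0,f_*(s)]$ would be harmless, as only the Lebesgue measure of the set enters). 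What your route buys is brevity and self-containment; what the cited route buys is that its intermediate step, the case $g=\chi_E$, is itself a frequently used lemma in the rearrangement-invariant theory.
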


It follows directly from this result that it holds for every $f,g \in \mathcal{M}$ that
\begin{equation} \label{HLI_sup}
	\sup_{\substack{\tilde{g} \in \mathcal{M} \\ \tilde{g}^* = g^*}} \int_{\mathcal{R}} \lvert f \tilde{g} \rvert \: d\mu \leq \int_0^{\infty} f^*g^* \: d\lambda.
\end{equation}
This motivates the definition of resonant measure spaces as those spaces where we have equality in \eqref{HLI_sup}.

\begin{definition}
	A $\sigma$-finite measure space $(\mathcal{R}, \mu)$ is said to be resonant if it holds for all $f, g \in \mathcal{M}(\mathcal{R}, \mu)$ that
	\begin{equation*}
		\sup_{\substack{\tilde{g} \in \mathcal{M} \\ \tilde{g}^* = g^*}} \int_\mathcal{R} \lvert f \tilde{g} \rvert \: d\mu = \int_0^{\infty} f^* g^* \: d\lambda.
	\end{equation*}
\end{definition}

The property of being resonant is a crucial one. Luckily, there is a straightforward characterisation of resonant measure spaces which we present below. For proof and further details see \cite[Chapter~2, Theorem~2.7]{BennettSharpley88}.

\begin{theorem} \label{TheoremCharResonance}
	A $\sigma$-finite measure space is resonant if and only if it is either non-atomic or completely atomic with all atoms having equal measure.
\end{theorem}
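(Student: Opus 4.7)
The plan is to establish the two implications separately.

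For sufficiency, suppose first that $(\mathcal{R}, \mu)$ is non-atomic. Given $f, g \in \mathcal{M}$, I would construct a measure-preserving map $\sigma : \mathcal{R} \to [0, \mu(\mathcal{R}))$ satisfying $\lvert f(x) \rvert = f^*(\sigma(x))$ for $\mu$-a.e.\ $x$. The natural candidate is
\begin{equation*}
	\sigma(x) = f_*(\lvert f(x) \rvert) + \tau(x),
\end{equation*}
where $\tau$ is, on each level set $\{\lvert f \rvert = c\}$ of positive measure, chosen to be a measure-preserving bijection (modulo null sets) onto $[0, \mu(\{\lvert f \rvert = c\}))$. Such a $\tau$ exists because each such level set is itself non-atomic (as a measurable subset of a non-atomic space) and therefore isomorphic to a Lebesgue interval of the same total measure by the standard classification of non-atomic $\sigma$-finite spaces; on level sets of measure zero $\tau$ may be set to zero since these contribute nothing. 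Once $\sigma$ is produced, define $\tilde g := g^* \circ \sigma$; the measure-preserving property of $\sigma$ renders $\tilde g$ equimeasurable with $g^*$, hence with $g$, and a change of variables gives
\begin{equation*}
	\int_{\mathcal{R}} \lvert f \tilde g \rvert \, d\mu = \int_{\mathcal{R}} f^*(\sigma(x)) g^*(\sigma(x)) \, d\mu(x) = \int_0^{\mu(\mathcal{R})} f^* g^* \, d\lambda = \int_0^\infty f^* g^* \, d\lambda,
\end{equation*}
where the last equality uses that $f^* \equiv 0$ on $[\mu(\mathcal{R}), \infty)$, so equality in \eqref{HLI_sup} is attained. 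The completely atomic case with all atoms of common measure $a$ is handled analogously: after enumerating the atoms $\{A_n\}_{n \in \mathbb{N}}$ and sorting the sequence of values of $\lvert f \rvert$ on the atoms into non-increasing order via a permutation $\pi$, one defines $\sigma$ affinely on each $A_{\pi^{-1}(n)}$ to map it onto $[na, (n+1)a)$.

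For necessity I would argue by contraposition. If $(\mathcal{R}, \mu)$ is neither non-atomic nor completely atomic with equal atoms, then one of the following holds: (i) it is completely atomic but contains atoms $A_1, A_2$ with $\mu(A_1) < \mu(A_2)$, or (ii) it contains both an atom $A$ and a non-atomic part $N$ of positive measure. In case (i), take $f = \chi_{A_2}$ and $g = \chi_{A_1}$, so that $\int_0^\infty f^* g^* \, d\lambda = \mu(A_1) > 0$; any $\tilde g$ equimeasurable with $g$ has the form $\chi_B$ with $B$ a union of atoms of total measure $\mu(A_1)$, and since $A_2$ is indecomposable of strictly greater measure, $B \cap A_2$ must be null, forcing $\int_{\mathcal{R}} \lvert f \tilde g \rvert \, d\mu = 0$. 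In case (ii), by non-atomicity of $N$ I can pick $E \subseteq N$ with $0 < \mu(E) < \mu(A)$ and set $f = \chi_A$, $g = \chi_E$; then $\int_0^\infty f^* g^* \, d\lambda = \mu(E)$, while every $\tilde g = \chi_B$ equimeasurable with $g$ satisfies $\mu(B) = \mu(E) < \mu(A)$ and thus cannot contain the atom $A$, again producing $\int_{\mathcal{R}} \lvert f \tilde g \rvert \, d\mu = 0$.

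The principal obstacle I expect lies in the non-atomic sufficiency step, specifically in the global assembly of the pieces $\tau$ defined on the various level sets $\{\lvert f \rvert = c\}$ into a single measurable function on $\mathcal{R}$, and in verifying that the resulting $\sigma$ is genuinely measure-preserving (rather than just measure-preserving on each level set separately). This requires a careful invocation of the structure theorem for non-atomic $\sigma$-finite measure spaces together with a measurable selection argument to ensure joint measurability across the uncountably many level sets. The necessity direction, by contrast, reduces to an elementary case analysis once the correct atom-based counterexamples are identified.
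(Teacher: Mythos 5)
The paper itself offers no proof of this statement---it is quoted from the literature (Bennett--Sharpley, Chapter~2, Theorem~2.7)---so your proposal has to stand on its own. Your necessity direction does: the case analysis is exhaustive for $\sigma$-finite spaces, and the two counterexamples built around an atom of largest measure genuinely defeat resonance. The gap is in sufficiency. You try to \emph{attain} the supremum in \eqref{HLI_sup} by exhibiting a single measure-preserving $\sigma$ with $\lvert f \rvert = f^* \circ \sigma$ (resp.\ a sorting permutation $\pi$ of the atoms), but no such object need exist when $\mu(\mathcal{R}) = \infty$ and $\lim_{t \to \infty} f^*(t) > 0$. Concretely, on $(\mathbb{N}, m)$ take $f(n) = 1 - 2^{-n-1}$: then $f^* \equiv 1$, no permutation arranges the values of $f$ in non-increasing order, and for $g = \chi_{\{0\}}$ every admissible $\tilde{g}$ has $\lvert \tilde{g} \rvert = \chi_{\{k\}}$ for some $k$ and yields $\sum_n \lvert f \tilde{g} \rvert = 1 - 2^{-k-1} < 1 = \int_0^\infty f^* g^* \, d\lambda$; the space is resonant, yet the supremum is not attained. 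The same happens non-atomically, e.g.\ for $f = 2\chi_{(-\infty,0)} + \chi_{[0,\infty)}$ on $\mathbb{R}$, where in addition your formula $\sigma(x) = f_*(\lvert f(x) \rvert) + \tau(x)$ returns $\infty$ on the level set $\{\lvert f \rvert = 1\}$ because $f_*(1) = \infty$. (A smaller issue in the same step: even when $f^*$ decays, the map $\sigma$ is only available from $\supp f$ onto $\supp f^*$, so $g^* \circ \sigma$ must still be extended off $\supp f$ before it is equimeasurable with $g$.)

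The repair cannot avoid an approximation step: fix $t_0 < \infty$ and $\varepsilon > 0$, replace $f$ by a truncation or simple minorant for which the exact rearrangement does exist on a set of finite measure, produce $\tilde{g}$ with $\int_{\mathcal{R}} \lvert f \tilde{g} \rvert \, d\mu \geq \int_0^{t_0} f^* g^* \, d\lambda - \varepsilon$, and then pass to the limit. This is precisely why the literature distinguishes resonant from \emph{strongly} resonant spaces (those where the supremum is attained for all $f, g$): the latter additionally require $\mu(\mathcal{R}) < \infty$, and your argument as written proves sufficiency only for that smaller class. The obstacle you flagged---the measurable assembly of the fibrewise maps $\tau$---is real but routine; the attainment issue is what actually breaks the proof.
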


Besides the non-increasing rearrangement, we will also need the concept of elementary maximal function, sometimes also called the maximal non-increasing rearrangement, which is defined as the Hardy transform of the non-increasing rearrangement.

\begin{definition}
	The elementary maximal function $f^{**}$ of $f \in \mathcal{M}$ is defined for $t \in (0, \infty)$ by 
	\begin{equation*}
		f^{**}(t) = \frac{1}{t} \int_0^{t} f^*(s) \: ds.
	\end{equation*} 
\end{definition}

Finally, we will need the following simple observation. For completeness, we also provide the short proof.

\begin{proposition} \label{PropACR}
	Let $f \in \mathcal{M}$ satisfy
	\begin{equation} \label{PropACR:E1}
		\lim_{t \to \infty} f^*(t) = 0
	\end{equation}
	and assume that the sequence $E_k \subseteq \mathcal{R}$ satisfies $\chi_{E_k} \to 0$ $\mu$-a.e.~as $k \to \infty$. Then
	\begin{align*}
		\lim_{k \to \infty} (f \chi_{E_k})^*(t) &= 0 & \text{for } t \in (0, \infty).
	\end{align*}
\end{proposition}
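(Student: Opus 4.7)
The plan is to fix $t>0$ and show directly from the definition of the non-increasing rearrangement that $(f\chi_{E_k})^*(t)$ can be made arbitrarily small by taking $k$ large. Concretely, I will reduce the problem to showing that the level set measures $\mu(E_k \cap \{\lvert f \rvert > \varepsilon\})$ tend to zero for each fixed $\varepsilon>0$.

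First I would unwind the definitions: since $(f\chi_{E_k})^*(t) = \inf\{s \geq 0 \colon (f\chi_{E_k})_*(s) \leq t\}$ and $(f\chi_{E_k})_*(s) = \mu(E_k \cap \{\lvert f\rvert > s\})$, the inequality $(f\chi_{E_k})^*(t) \leq \varepsilon$ follows once we know that $\mu(E_k \cap \{\lvert f\rvert > \varepsilon\}) \leq t$. So the entire statement reduces to proving that for each fixed $\varepsilon>0$,
\begin{equation*}
	\lim_{k \to \infty} \mu(E_k \cap \{\lvert f \rvert > \varepsilon\}) = 0.
\end{equation*}

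Next I would exploit the hypothesis \eqref{PropACR:E1}. The relationship between $f^*$ and $f_*$ gives that $f^*(T) < \varepsilon$ forces $f_*(\varepsilon) \leq T$, and since $\lim_{T\to\infty} f^*(T) = 0$, we may choose $T$ large enough that $f^*(T) < \varepsilon$. Consequently $\mu(\{\lvert f \rvert > \varepsilon\}) = f_*(\varepsilon) \leq T < \infty$. This finiteness is the whole point: it enables the dominated convergence theorem for the sequence $\chi_{E_k \cap \{\lvert f \rvert > \varepsilon\}}$, which is dominated by the integrable function $\chi_{\{\lvert f \rvert > \varepsilon\}}$ and converges to $0$ $\mu$-almost everywhere by the assumption on $\chi_{E_k}$. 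Hence $\mu(E_k \cap \{\lvert f \rvert > \varepsilon\}) \to 0$, and combining with the first step yields the claim.

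The argument is essentially routine, so there is no genuine obstacle; the only subtle point is recognising that \eqref{PropACR:E1} is equivalent to every strict superlevel set of $\lvert f \rvert$ having finite measure, which is what unlocks the use of dominated convergence. Without this observation one could worry that $\{\lvert f \rvert > \varepsilon\}$ might be of infinite measure and $\mu(E_k \cap \{\lvert f \rvert > \varepsilon\})$ need not tend to zero from $\chi_{E_k} \to 0$ $\mu$-a.e.~alone.
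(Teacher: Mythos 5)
Your proof is correct and follows essentially the same route as the paper's: both reduce the claim to showing $\mu(E_k \cap \{\lvert f \rvert > \varepsilon\}) \to 0$, both extract the finiteness of $\mu(\{\lvert f \rvert > \varepsilon\})$ from \eqref{PropACR:E1}, and both conclude via dominated convergence (which the paper leaves implicit in its ``Hence''). The only cosmetic difference is that you derive the finiteness from the definition of $f^*$ while the paper invokes equimeasurability of $f$ and $f^*$.
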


\begin{proof}
	Fix $f$ and $E_k$ as in the assumptions and arbitrary $\varepsilon > 0$. By \eqref{PropACR:E1}, we have for $F \subseteq \mathcal{R}$ given by
	\begin{equation*}
		F = \{ \lvert f \rvert > \varepsilon\}
	\end{equation*}
	that
	\begin{equation*}
		\mu(F) = \lambda(\{ f^* > \varepsilon\}) < \infty.
	\end{equation*}
	Hence, for any $t \in (0,\infty)$ there is an $k_0$ such that for every $k \geq k_0$ we have 
	\begin{equation*}
		\lambda(\{ (f \chi_{E_k})^* > \varepsilon \}) = \mu(F\cap E_k) < t,
	\end{equation*}
	which implies that $(f \chi_{E_k})^*(t) \leq \varepsilon$.
\end{proof}

\subsection{Banach function norms and quasinorms} \label{SectionFunctionNormsQuasinorms}

\begin{definition}
	Let $\lVert \cdot \rVert : \mathcal{M}(\mathcal{R}, \mu) \rightarrow [0, \infty]$ be a mapping satisfying $\lVert \, \lvert f \rvert \, \rVert = \lVert f \rVert$ for all $f \in \mathcal{M}$. We say that $\lVert \cdot \rVert$ is a Banach function norm if its restriction to $\mathcal{M}_+$ satisfies the following axioms:
	\begin{enumerate}[label=\textup{(P\arabic*)}, series=P]
		\item \label{P1} it is a norm, in the sense that it satisfies the following three conditions:
		\begin{enumerate}[ref=(\theenumii)]
			\item \label{P1a} it is positively homogeneous, i.e.\ $\forall a \in \mathbb{C} \; \forall f \in \mathcal{M}_+ : \lVert a f \rVert = \lvert a \rvert \lVert f \rVert$,
			\item \label{P1b} it satisfies $\lVert f \rVert = 0 \Leftrightarrow f = 0$  $\mu$-a.e.,
			\item \label{P1c} it is subadditive, i.e.\ $\forall f,g \in \mathcal{M}_+ \: : \: \lVert f+g \rVert \leq \lVert f \rVert + \lVert g \rVert$,
		\end{enumerate}
		\item \label{P2} it has the lattice property, i.e.\ if some $f, g \in \mathcal{M}_+$ satisfy $f \leq g$ $\mu$-a.e., then also $\lVert f \rVert \leq \lVert g \rVert$,
		\item \label{P3} it has the Fatou property, i.e.\ if  some $f_n, f \in \mathcal{M}_+$ satisfy $f_n \uparrow f$ $\mu$-a.e., then also $\lVert f_n \rVert \uparrow \lVert f \rVert $,
		\item \label{P4} $\lVert \chi_E \rVert < \infty$ for all $E \subseteq \mathcal{R}$ satisfying $\mu(E) < \infty$,
		\item \label{P5} for every $E \subseteq \mathcal{R}$ satisfying $\mu(E) < \infty$ there exists some finite constant $C_E$, dependent only on $E$, such that the inequality $ \int_E f \: d\mu \leq C_E \lVert f \rVert $ is true for all $f \in \mathcal{M}_+$.
	\end{enumerate} 
\end{definition}

\begin{definition}
	Let $\lVert \cdot \rVert : \mathcal{M}(\mathcal{R}, \mu) \rightarrow [0, \infty]$ be a mapping satisfying $\lVert \, \lvert f \rvert \, \rVert = \lVert f \rVert$ for all $f \in \mathcal{M}$. We say that $\lVert \cdot \rVert$ is a quasi-Banach function norm if its restriction to $\mathcal{M}_+$ satisfies the axioms \ref{P2}, \ref{P3} and \ref{P4} of Banach function norms together with a weaker version of axiom \ref{P1}, namely
	\begin{enumerate}[label=\textup{(Q\arabic*)}]
		\item \label{Q1} it is a quasinorm, in the sense that it satisfies the following three conditions:
		\begin{enumerate}[ref=(\theenumii)]
			\item \label{Q1a} it is positively homogeneous, i.e.\ $\forall a \in \mathbb{C} \; \forall f \in \mathcal{M}_+ : \lVert af \rVert = \lvert a \rvert \lVert f \rVert$,
			\item \label{Q1b} it satisfies  $\lVert f \rVert = 0 \Leftrightarrow f = 0$ $\mu$-a.e.,
			\item \label{Q1c} there is a constant $C\geq 1$, called the modulus of concavity of $\lVert \cdot \rVert$, such that it is subadditive up to this constant, i.e.
			\begin{equation*}
				\forall f,g \in \mathcal{M}_+ : \lVert f+g \rVert \leq C(\lVert f \rVert + \lVert g \rVert).
			\end{equation*}
		\end{enumerate}
	\end{enumerate}
\end{definition}

Usually, it is assumed that the modulus of concavity is the smallest constant for which the part \ref{Q1c} of \ref{Q1} holds. We will follow this convention, even though the value will be of little consequence for our results. 

\begin{definition}
	Let $\lVert \cdot \rVert$ be a (quasi-)Banach function norm. We say that $\lVert \cdot \rVert$ is rearrangement-invariant, abbreviated r.i., if $\lVert f\rVert = \lVert g \rVert$ whenever $f, g \in \mathcal{M}$ are equimeasurable (in the sense of Definition~\ref{DEM}).
\end{definition}

\begin{definition}
	Let $\lVert \cdot \rVert_X$ be a (quasi-)Banach function norm. We then define the corresponding (quasi-)Banach function space $X$ as the set
	\begin{equation*}
		X = \left \{ f \in \mathcal{M};  \; \lVert f \rVert_X < \infty \right \}.
	\end{equation*}
	
	Furthermore, we will say that $X$ is rearrangement-invariant whenever $\lVert \cdot \rVert_X$ is.
\end{definition}

For a detailed treatment of (r.i.) Banach function spaces we refer the reader to \cite[Chapters~1 and 2]{BennettSharpley88}; for the overview of (r.i.) quasi-Banach function spaces we recommend \cite{LoristNieraeth23}, \cite{MusilovaNekvinda24}, \cite{NekvindaPesa24}, and the references therein. Here we focus exclusively on the properties that are directly related to our work.

Firstly, an important property of r.i.~quasi-Banach function spaces over $([0, \infty), \lambda)$ is that the dilation operator is bounded on those spaces, as stated in the following theorem. This is a classical result in the context of r.i.~Banach function spaces which has been recently extended to r.i.~quasi-Banach function spaces in \cite[Section~3.4]{NekvindaPesa24} (for the classical version see e.g.~\cite[Chapter~3, Proposition~5.11]{BennettSharpley88}).

\begin{definition} \label{DDO}
	Let $t \in (0, \infty)$. The dilation operator $D_t$ is defined on $\mathcal{M}([0, \infty), \lambda)$ by the formula
	\begin{equation*}
		D_tf(s) = f(ts),
	\end{equation*}
	where $f \in \mathcal{M}([0, \infty), \lambda)$, $s \in (0, \infty)$.
\end{definition}

\begin{theorem} \label{TDRIS}
	Let $X$ be an r.i.~quasi-Banach function space over $([0, \infty), \lambda)$ and let $t \in (0, \infty)$. Then $D_t: X \rightarrow X$ is a bounded operator.
\end{theorem}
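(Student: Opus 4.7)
The plan is to split into the cases $t \geq 1$ and $0 < t < 1$. When $t \geq 1$, the identity $(D_t f)^*(s) = f^*(ts)$, which follows at once from the distributional computation $(D_t f)_*(\sigma) = t^{-1} f_*(\sigma)$, combined with the monotonicity of $f^*$, gives $(D_t f)^* \leq f^*$ pointwise, so the r.i.~monotonicity principle $f^* \leq g^* \Rightarrow \lVert f \rVert_X \leq \lVert g \rVert_X$ (see \cite[Corollary~3.5]{MusilovaNekvinda24}) yields $\lVert D_t f \rVert_X \leq \lVert f \rVert_X$ immediately. In particular $\lVert D_s \rVert_{X \to X} \leq 1$ for every $s \geq 1$.

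For the substantive case $0 < t < 1$, I would first reduce to $t = 1/n$ for a positive integer $n$. Given such $t$, choose $n \in \N$ with $n \geq 1/t$, so that $nt \geq 1$, and use the semigroup identity $D_r D_s = D_{rs}$ to decompose $D_t = D_{1/n} \circ D_{nt}$. Since $\lVert D_{nt} f \rVert_X \leq \lVert f \rVert_X$ by the first case, it suffices to bound $D_{1/n}$.

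The key observation for $D_{1/n}$ is the distributional identity $(D_{1/n} f)_*(\sigma) = n \, f_*(\sigma)$, which says that $D_{1/n} f$ is equimeasurable with a sum of $n$ disjointly supported copies of $f$. To realise this concretely, I would partition $[0, \infty) = \bigsqcup_{k=0}^{n-1} E_k$ with $E_k = \bigcup_{j=0}^{\infty} [jn + k, jn + k + 1)$, and define a measure-preserving bijection $\phi_k \colon E_k \to [0, \infty)$ that translates the $j$-th component interval of $E_k$ onto $[j, j+1)$. Setting $\tilde f_k = (f \circ \phi_k) \chi_{E_k}$, each $\tilde f_k$ is equimeasurable with $f$, the supports of the $\tilde f_k$ are pairwise disjoint, and $g := \sum_{k=0}^{n-1} \tilde f_k$ therefore satisfies $g_* = n f_* = (D_{1/n} f)_*$. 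By rearrangement invariance and iterated application of the quasi-triangle inequality,
\begin{equation*}
	\lVert D_{1/n} f \rVert_X = \lVert g \rVert_X \leq C_n \sum_{k=0}^{n-1} \lVert \tilde f_k \rVert_X = n\, C_n\, \lVert f \rVert_X,
\end{equation*}
where $C_n$ is a finite constant depending only on $n$ and the modulus of concavity of $\lVert \cdot \rVert_X$. Combining with the reduction above gives $\lVert D_t \rVert_{X \to X} \leq n\, C_n$.

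The main obstacle I foresee is the bookkeeping inside the equimeasurability transfer: one must carefully verify that $\phi_k$ is genuinely measure-preserving, that the supports of the $\tilde f_k$ are pairwise disjoint (so that the distribution functions add), and that the resulting distribution function of $g$ really coincides with that of $D_{1/n} f$. Once this is in place, the remainder of the argument relies on nothing beyond axioms \ref{Q1c} and \ref{P2} together with the rearrangement invariance of $\lVert \cdot \rVert_X$.
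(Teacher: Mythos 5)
Your proof is correct, but it takes a genuinely different route from the one the paper relies on (the paper does not prove Theorem~\ref{TDRIS} itself; it cites \cite[Section~3.4]{NekvindaPesa24}, whose structure is visible in the proof sketch of Theorem~\ref{TheoremDilationFinMeasure}). Both arguments share the same skeleton: for $t \geq 1$ one uses $(D_t f)^*(s) = f^*(ts) \leq f^*(s)$ together with the monotonicity of r.i.~quasinorms under the pointwise ordering of rearrangements, and for $t < 1$ one decomposes the dilated function into finitely many pieces, each controlled by $\lVert f \rVert_X$, and applies the quasi-triangle inequality. The difference lies entirely in the decomposition. The referenced proof treats the single dilation $D_{2/3}$ by splitting a function as $R_1 g + R_2 g$ (restrictions to alternating blocks of intervals), showing that each piece rearranges to something dominated by a further dilate, and then reaches general $t < 1$ by iterating powers of $2/3$, which produces a constant of order $(2C)^k$ with $(2/3)^k \lesssim t$, where $C$ is the modulus of concavity. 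You instead reduce in one step to $D_{1/n}$ via the factorisation $D_t = D_{1/n} \circ D_{nt}$ with $nt \geq 1$, and realise $D_{1/n} f$ explicitly as equimeasurable with a disjoint sum of $n$ translated copies of $f$; the bookkeeping you flag (that the piecewise translations $\phi_k$ are measure-preserving, that the supports of the $\tilde f_k$ are pairwise disjoint so that the distribution functions add, and that $(D_{1/n} f)_* = n f_*$) all checks out, and the argument indeed uses nothing beyond \ref{Q1c}, \ref{P2}, and rearrangement invariance, yielding the explicit bound $n C_n$. Your version is more self-contained and arguably cleaner over $([0,\infty), \lambda)$; the one advantage of the restriction-based decomposition in \cite{NekvindaPesa24} is that it localises, which is what allows the paper to adapt it to the finite-measure setting of Theorem~\ref{TheoremDilationFinMeasure}, whereas placing $n$ disjointly supported copies of $f$ requires the full half-line.
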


We will, however, need to use the theorem for spaces defined over measure spaces of finite measure, where the situation becomes somewhat less elegant. This is due to the fact that dilating a function changes its support. Luckily, we will only need to dilate non-increasing functions and with this additional assumption the proof contained in \cite[Section~3.4]{NekvindaPesa24} translates directly to this modified setting.

\begin{theorem} \label{TheoremDilationFinMeasure}
	Let $X$ be an r.i.~quasi-Banach function space over $([0, \alpha], \lambda)$, $\alpha < \infty$, and let $t \in (0, \infty)$. Then there is a constant $C_t \in (0, \infty)$, depending only on $X$ and $t$, such that $\lVert (D_t f) \chi_{[0, \alpha)} \rVert_X \leq C_t \lVert f \chi_{[0, \alpha)} \rVert_X$ for all non-increasing $f \in \mathcal{M}_+([0, \infty), \lambda)$.
\end{theorem}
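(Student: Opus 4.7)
My plan is to split on whether $t \geq 1$ or $t \in (0,1)$, dispatching the first case by the lattice property and reducing the second to a reprise of the proof of Theorem~\ref{TDRIS}.

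For $t \geq 1$, the non-increasingness of $f$ together with $ts \geq s$ yields $(D_t f)(s) = f(ts) \leq f(s)$ for every $s \in [0, \infty)$, and hence $(D_t f) \chi_{[0,\alpha)} \leq f \chi_{[0,\alpha)}$ pointwise. Axiom~\ref{P2} then delivers the conclusion with $C_t = 1$.

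For $t \in (0,1)$, the initial observation is that $ts \in [0, t\alpha) \subset [0, \alpha)$ whenever $s \in [0, \alpha)$, so $(D_t f)(s) \chi_{[0,\alpha)}(s) = f(ts) \chi_{[0,\alpha)}(s)$ depends only on the values of $f$ on $[0, \alpha)$. Replacing $f$ by $f \chi_{[0,\alpha)}$ (which remains non-increasing and non-negative) allows me to assume without loss of generality that $f$ is supported in $[0, \alpha)$. With that reduction in hand, I would transcribe the argument of \cite[Section~3.4]{NekvindaPesa24} behind Theorem~\ref{TDRIS}: it uses only the rearrangement-invariant structure, the lattice, Fatou, and quasi-triangle axioms, and the non-increasing nature of the input, none of which is tied to the underlying measure space being infinite. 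The one genuinely new subtlety is that dilating by $t < 1$ stretches the support of $f$ from $[0, \alpha)$ out to $[0, \alpha/t)$; however, the terminal truncation by $\chi_{[0,\alpha)}$ combined with \ref{P2} discards the overflow portion without inflating the (quasi)norm, so this causes no real difficulty.

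I expect the main obstacle to be confirming that no step of the original $[0, \infty)$ proof silently exploits features unavailable on $[0, \alpha]$, such as decompositions into infinitely many dyadic blocks or dilates whose supports sprawl well past $[0, \alpha/t)$. Provided each such step can be kept within the effective working domain $[0, \alpha/t)$ (the ambient domain after the support reduction) and is then followed by restriction to $[0, \alpha)$, the adaptation should be routine and yields a constant $C_t$ depending only on $X$ and $t$.
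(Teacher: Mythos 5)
Your proposal is correct and follows essentially the same route as the paper: both handle the statement by rerunning the argument of \cite[Section~3.4]{NekvindaPesa24} behind Theorem~\ref{TDRIS}, inserting the truncation $\chi_{[0,\alpha)}$ at each step and exploiting the monotonicity of $f$ to keep the intermediate estimates valid (your separate treatment of $t\geq 1$ via \ref{P2} with $C_t=1$ is subsumed in the paper's observation that $D_b f\leq D_a f$ for $a<b$). The one detail worth making explicit --- and the only genuinely new inequality the paper records --- is $((R_i g)\chi_{[0,\alpha)})^* \leq (R_i g)^*\chi_{[0,\alpha)}$ for the two pieces of the decomposition $g = R_1 g + R_2 g$, which is precisely what justifies your claim that the terminal truncation ``discards the overflow without inflating the quasinorm'' at the step where rearrangements are taken.
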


\begin{proof}
	We only give a brief outline of the changes, as the modifications are rather straightforward. We assume that the non-increasing function $f \in \mathcal{M}_+([0, \infty), \lambda)$ is fixed from now on.
	\begin{enumerate}
		\item The monotonicity of $f$ ensures that $D_b f \leq D_a f$ $\lambda$-a.e.~for $0 < a < b < \infty$, hence the property \ref{P2} of $\lVert \cdot \rVert_X$ ensures that $\lVert (D_b f) \chi_{[0, \alpha)} \rVert_X \leq  \lVert (D_a f) \chi_{[0, \alpha)} \rVert_X$. Consequently, the conclusions of \cite[Lemmata~3.18 and 3.19]{NekvindaPesa24} remain valid.
		\item Employing the notation of \cite[Definition~3.20]{NekvindaPesa24}, we observe that $g = R_1 g + R_2 g$ for all $g \in \mathcal{M}_+([0, \infty), \lambda)$ and that \cite[Lemma~3.21]{NekvindaPesa24} remains valid (as it is a statement about a function, $\lVert \cdot \rVert_X$ plays no role).
		\item Since $ ((R_i g) \chi_{[0, \alpha)})^* \leq (R_i g)^* \chi_{[0, \alpha)}$, for $i = 1,2$ and all $g \in \mathcal{M}_+([0, \infty), \lambda)$ (which is easily verified), we may compute
		\begin{equation*}
			\begin{split}
				\lVert (D_{\frac{2}{3}} f) \chi_{[0, \alpha)} \rVert_X &\lesssim \lVert (R_1 D_{\frac{2}{3}} f) \chi_{[0, \alpha)} \rVert_X + \lVert (R_2 D_{\frac{2}{3}} f) \chi_{[0, \alpha)} \rVert_X \\
				&\leq \lVert (R_1 D_{\frac{2}{3}} f)^* \chi_{[0, \alpha)} \rVert_X + \lVert (R_2 D_{\frac{2}{3}} f)^* \chi_{[0, \alpha)} \rVert_X \\
				&\lesssim \lVert (D_{\frac{3}{2}} D_{\frac{2}{3}} f) \chi_{[0, \alpha)} \rVert_X \\
				&= \lVert f \chi_{[0, \alpha)} \rVert_X,
			\end{split}
		\end{equation*}
		where all the hidden constants are either absolute or depend only on $X$, analogously to \cite[Lemma~3.22]{NekvindaPesa24}.
		\item The extrapolation to other values of $t$ then follows precisely as in \cite[Theorem~3.23]{NekvindaPesa24}.
	\end{enumerate}
\end{proof}

Secondly, we will also work with the so called Hardy--Littlewood--P\'{o}lya relation and the related Hardy--Littlewood--P\'{o}lya principle.

\begin{definition} \label{DefHLPR}
	Let $f, g \in \mathcal{M}$. We say that $g$ majorises $f$ with respect to the Hardy--Littlewood--P\'{o}lya relation, denoted by $f \prec g$, if $f^{**} \leq g^{**}$, i.e.~if 
	\begin{equation*}
		\int_0^{t} f^* \: d\lambda \leq \int_0^{t} g^* \: d\lambda 
	\end{equation*}
	for all $t \in (0, \infty)$.
\end{definition}

\begin{definition} \label{DHLP}
	Let $\lVert \cdot \rVert$ be an r.i.~quasi-Banach function norm. We say that the Hardy--Littlewood--P\'{o}lya principle holds for $\lVert \cdot \rVert$ if the estimate $\lVert f \rVert \leq \lVert g \rVert$ is true for any pair of functions $f, g \in \mathcal{M}$ satisfying $f \prec g$.
\end{definition}

Let us put this property into context:
\begin{enumerate}
	\item The property \ref{P2} of quasi-Banach function norms states that $\lVert f \rVert \leq \lVert g \rVert$ whenever $\lvert f \rvert \leq \lvert g \rvert$ $\mu$-a.e.
	\item The property that the quasinorm is rearrangement-invariant is equivalent to saying that $\lVert f \rVert \leq \lVert g \rVert$ whenever $f^* \leq g^*$ $\lambda$-a.e. This follows from the Luxemburg representation theorem (presented as Theorem~\ref{TheoremRepresentation} below); for proof, see \cite[Corollary~3.5]{MusilovaNekvinda24}).
	\item The property that the Hardy--Littlewood--P\'{o}lya principle holds for the quasinorm states that $\lVert f \rVert \leq \lVert g \rVert$ whenever $f \prec g$.	
\end{enumerate}
Hence, we the three properties each require that the quasinorm in question is monotone with respect to some partial ordering on $\mathcal{M}_0$ and they get progressively stronger as the partial ordering gets weaker.

It is well known that the Hardy--Littlewood--P\'{o}lya principle holds for every r.i.~Banach function norm (see e.g.~\cite[Corollary~4.7]{BennettSharpley88}). The characterisation of its validity for r.i.~quasi-Banach function spaces is an interesting open problem, but for our purposes, the following necessary condition that was obtained in \cite[Lemma~2.24 and Theorem~5.9]{Pesa22} will suffice:

\begin{theorem} \label{TheoremNeccHLP}
	Let $\lVert \cdot \rVert_X$ be an r.i.~quasi-Banach function norm for which the Hardy--Littlewood--P\'{o}lya principle holds and let $X$ be the corresponding quasi-Banach function space. Then
	\begin{equation*}
		L^1 \cap L^{\infty} \hookrightarrow X \hookrightarrow L^1 + L^{\infty}.
	\end{equation*}
\end{theorem}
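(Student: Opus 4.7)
My plan is to prove the two embeddings separately; only the first actually uses the HLP hypothesis. Throughout I invoke the Luxemburg representation theorem to obtain an r.i.\ quasi-Banach function norm $\lVert\cdot\rVert_{\bar X}$ on $([0,\mu(\mathcal R)),\lambda)$ satisfying $\lVert f\rVert_X = \lVert f^*\rVert_{\bar X}$ for all $f \in \mathcal M$; since $\prec$ depends only on non-increasing rearrangements, the HLP principle transfers from $X$ to $\bar X$.

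\emph{Right embedding} $X \hookrightarrow L^1+L^\infty$. HLP is not needed here. First I recall the classical estimate $\lVert f\rVert_{L^1+L^\infty} \le \int_0^1 f^*(s)\,ds$, obtained by decomposing $f = f_\infty + f_1$ via truncation at the level $f^*(1)$: one gets $\lVert f_\infty\rVert_\infty \le f^*(1)$ and, computing the distribution function of the tail, $\lVert f_1\rVert_1 = \int_0^1 (f^*-f^*(1))\,ds$. Next I apply axiom \ref{P5} for $\bar X$ with the finite-measure set $E = (0,1)\cap[0,\mu(\mathcal R))$ to obtain a constant $C$ such that $\int_E g\,d\lambda \le C\lVert g\rVert_{\bar X}$ for all $g \in \mathcal M_+$. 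Substituting $g = f^*$ (which vanishes beyond $\mu(\mathcal R)$) yields $\int_0^1 f^*\,ds \le C\lVert f\rVert_X$, giving the embedding.

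\emph{Left embedding} $L^1 \cap L^\infty \hookrightarrow X$. Assume first $\mu(\mathcal R) \ge 1$ and set $M := \max(\lVert f\rVert_1, \lVert f\rVert_\infty) = \lVert f\rVert_{L^1\cap L^\infty}$. From the pointwise bound $f^*(s) \le \lVert f\rVert_\infty$ and $\int_0^\infty f^*\,ds = \lVert f\rVert_1$ I get
\begin{equation*}
\int_0^t f^*(s)\,ds \le M\min(t,1) = \int_0^t M\chi_{(0,1)}(s)\,ds, \qquad t>0,
\end{equation*}
that is, $f^* \prec M\chi_{(0,1)}$ in $\mathcal M([0,\mu(\mathcal R)),\lambda)$. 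The HLP principle in $\bar X$ then gives $\lVert f\rVert_X = \lVert f^*\rVert_{\bar X} \le M\lVert \chi_{(0,1)}\rVert_{\bar X}$, the last factor being finite by \ref{P4}. In the remaining case $\mu(\mathcal R) < 1$ one has $L^\infty \subseteq L^1$ with $\lVert\cdot\rVert_{L^1\cap L^\infty} = \lVert\cdot\rVert_\infty$, so the trivial pointwise bound $\lvert f\rvert \le \lVert f\rVert_\infty \chi_{\mathcal R}$ combined with \ref{P2} and \ref{P4} already yields $\lVert f\rVert_X \le \lVert\chi_{\mathcal R}\rVert_X \lVert f\rVert_\infty$, with no appeal to HLP.

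\emph{Main obstacle.} The argument is largely a bookkeeping of classical ideas, so I foresee no serious analytic difficulty. The one delicate point is to ensure that HLP as formulated in Definition \ref{DHLP} (a property of the quasinorm on its own ambient measure space) can be used on the pair $(f^*, M\chi_{(0,1)})$ inside the representation $\bar X$; this is a routine consequence of the representation identity $\lVert\cdot\rVert_X = \lVert(\cdot)^*\rVert_{\bar X}$, together with the observation that the elementary maximal function appearing in the definition of $\prec$ is invariant under equimeasurable replacements.
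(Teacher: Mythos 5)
There is a genuine gap in your proof of the right embedding $X \hookrightarrow L^1+L^{\infty}$. You assert that the Hardy--Littlewood--P\'{o}lya hypothesis is not needed there and instead invoke axiom \ref{P5} for $\overline{X}$. But in the quasi-Banach setting of this paper, \ref{P5} is \emph{not} part of the definition of a quasi-Banach function norm (only \ref{Q1}, \ref{P2}, \ref{P3}, \ref{P4} are assumed), and the representation theorem only guarantees \ref{P5} for $\overline{X}$ \emph{when} $\lVert\cdot\rVert_X$ already has it. In fact, the paper points out later (in the proof of Theorem~\ref{TheoremACHLP}) that the validity of \ref{P5} is a \emph{consequence} of the present theorem, so your argument is circular. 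The embedding genuinely fails without the HLP hypothesis: $L^{p}$ over $([0,\infty),\lambda)$ with $0<p<1$ is an r.i.\ quasi-Banach function space containing $t\mapsto t^{-1}\chi_{(0,1)}(t)$, which is not in $L^1+L^{\infty}$. The repair is to use HLP where you claimed it was unnecessary: for $E\subseteq\mathcal{R}$ with $\mu(E)=1$ (or with measure comparable to $1$ in the atomic case) one checks that $f^{**}(1)\chi_E \prec f$, whence HLP and \ref{P4} give $\int_0^1 f^*\,d\lambda = f^{**}(1) \leq \lVert f\rVert_X / \lVert\chi_E\rVert_X$, which is exactly the right embedding with the norm \eqref{DefIntL1Infty}. (The paper itself does not prove the theorem but cites \cite[Lemma~2.24 and Theorem~5.9]{Pesa22}.)

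Your left embedding is essentially sound: $f^*\prec M\chi_{(0,1)}$ with $M=\lVert f\rVert_{L^1\cap L^{\infty}}$ is correct, and the case $\mu(\mathcal{R})<1$ is handled adequately by the pointwise bound and \ref{P2}. One caution: transferring HLP from $X$ to an arbitrary Luxemburg representation $\overline{X}$ is not as routine as you suggest — the paper devotes Proposition~\ref{PropRepreHLP} to this and notes that in the completely atomic case the transfer depends on the specific choice of the local component. You can sidestep this entirely by comparing $f$ directly with $M\chi_E$ for a suitable $E\subseteq\mathcal{R}$ of measure at least $1$ (a single set in the non-atomic case, a finite union of atoms otherwise) and applying HLP in $X$ itself.
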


Here, $L^{1} \cap L^{\infty}$ and $L^1 + L^{\infty}$ are the r.i.~Banach function spaces induced by the norms
\begin{align}
	\lVert f \rVert_{L^1 + L^{\infty}} &= \int_0^{1} f^* \: d\lambda, \nonumber \\ 
	\lVert f \rVert_{L^{1} \cap L^{\infty}} &= f^*(0) + \int_0^{\infty} f^* \: d\lambda; \label{DefIntL1Infty}
\end{align}
note that said norms are equivalent to those obtained via the classical abstract constructions for sums and intersections of Banach spaces, see e.g.~\cite[Chapter~2, Section~6]{BennettSharpley88}. These formulas are beneficial to our purposes as they do not depend on the underlying measure space.

Finally, let us introduce a concept that we will briefly touch in Section~\ref{SectionRepreCorrect}, that is the concept of associate spaces. As they are not the focus of the paper, we will not go deep and limit the exposition to the strictly necessary information. To the interested reader we recommend \cite[Chapters~1 and 2]{BennettSharpley88} for an in depth treatment in the classical context of (r.i.) Banach function spaces, while a more comprehensive overview for r.i.~quasi-Banach function spaces can be found in \cite[Section~2.4]{MusilovaNekvinda24}.

\begin{definition}
	Let $\lVert \cdot \rVert_X$ be an r.i.~quasi-Banach function norm satisfying \ref{P5} and let $X$ be the corresponding quasi-Banach function space. Then the functional $\lVert \cdot \rVert_{X'}$, defined for every $f \in \mathcal{M}$ by
	\begin{equation*}
		\lVert f \rVert_{X'} = \sup_{\lVert g \rVert_X \leq 1} \int_0^{\infty} f^* g^* \: d\lambda,
	\end{equation*}
	is called the associate norm of $\lVert \cdot \rVert_X$, while the set
	\begin{equation*}
		X' = \left \{ f \in \mathcal{M}; \; \lVert f \rVert_{X'} < \infty \right \}
	\end{equation*}
	is called the associate space of $X$.
\end{definition}

The terminology is well justified, as the associate norm of an r.i.~quasi-Banach function norm satisfying \ref{P5} is always an r.i.~Banach function norm, see e.g.~\cite[Remark~2.3.(iii)]{EdmundsKerman00} or \cite[Theorem~3.1]{GogatishviliSoudsky14}.

\subsection{Absolute continuity of the quasinorm} \label{SecACqN}

\begin{definition} \label{DefACqN}
	Let $\lVert \cdot \rVert_X$ be a quasi-Banach function norm and let $X$ be the corresponding quasi-Banach function space. We say that a function $f \in X$ has absolutely continuous quasinorm if it holds that $\lVert f \chi_{E_k} \rVert_X \rightarrow 0$ as $k \to \infty$ whenever $E_k$ is a sequence of $\mu$-measurable subsets of $\mathcal{R}$ such that $\chi_{E_k} \rightarrow 0$ $\mu$-a.e.~as $k \to \infty$. The set of all such functions is denoted $X_a$.
	
	When $X_a = X$, i.e.~when every $f \in X$ has absolutely continuous quasinorm, we further say that the space $X$ itself has absolutely continuous quasinorm.
\end{definition}

We will need the following abstract version of the Lebesgue dominated convergence theorem. Its proof is precisely the same as that of \cite[Chapter~1, Proposition~3.6]{BennettSharpley88}.
\begin{proposition} \label{PropDomConv}
	Let $\lVert \cdot \rVert_X$ be a quasi-Banach function norm and let $X$ be the corresponding quasi-Banach function space. Then the following statements are equivalent:
	\begin{enumerate}
		\item $f \in X_a$.
		\item It holds for every sequence $g_n \in \mathcal{M}$ and every function $g \in \mathcal{M}$ such that $g_n \to g$ $\mu$-a.e.~and $\lvert g_n \rvert \leq \lvert f \rvert$ that all $g_n, g \in X$ and $g_n \to g$ in $X$. 
	\end{enumerate}
\end{proposition}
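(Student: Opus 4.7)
The implication (ii)$\Rightarrow$(i) is straightforward: given a sequence $E_k$ of $\mu$-measurable sets with $\chi_{E_k}\to 0$ $\mu$-a.e., I take $g:=0$ and $g_k:=f\chi_{E_k}$, for which $|g_k|\le|f|$ and $g_k\to 0$ $\mu$-a.e., so (ii) directly yields $\lVert f\chi_{E_k}\rVert_X\to 0$. For the converse, start from $f\in X_a$ and a pair $g_n,g\in\mathcal{M}$ as in (ii). By the lattice property~\ref{P2}, $|g_n|\le|f|$ forces $g_n\in X$, and passing to the $\mu$-a.e.\ limit yields $|g|\le|f|$, hence $g\in X$. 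Setting $\phi_n:=g_n-g$, one has $|\phi_n|\le 2|f|$ and $\phi_n\to 0$ $\mu$-a.e., and the remaining task is to show $\lVert\phi_n\rVert_X\to 0$.

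The heart of the plan is to split $|\phi_n|$ $\mu$-a.e.\ into a ``uniformly small'' part controlled by $\varepsilon|f|$ and an ``exceptional'' part supported on sets whose characteristic functions tend to zero $\mu$-a.e., so that absolute continuity of $f$ takes care of the latter. For a fixed $\varepsilon>0$, set
\begin{equation*}
	A_n:=\bigcup_{m\ge n}\{x\in\mathcal{R}:\,|\phi_m(x)|>\varepsilon|f(x)|\}.
\end{equation*}
These sets are decreasing in $n$, and their intersection has $\mu$-measure zero: on $\{|f|=0\}$ all $\phi_m$ vanish (since $|\phi_m|\le 2|f|$), while on $\{|f|>0\}$ the pointwise convergence $\phi_m\to 0$ forces $|\phi_m(x)|<\varepsilon|f(x)|$ for every sufficiently large $m$. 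Hence $\chi_{A_n}\to 0$ $\mu$-a.e.

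By construction, $|\phi_n|\le\varepsilon|f|$ holds on $\mathcal{R}\setminus A_n$, while only the crude bound $|\phi_n|\le 2|f|$ is needed on $A_n$, so
\begin{equation*}
	|\phi_n|\le\varepsilon|f|+2|f|\chi_{A_n}\qquad\mu\text{-a.e.}
\end{equation*}
Combining the lattice property~\ref{P2} with the quasi-triangle inequality (with modulus of concavity $C$) yields
\begin{equation*}
	\lVert\phi_n\rVert_X\le C\bigl(\varepsilon\lVert f\rVert_X+2\lVert f\chi_{A_n}\rVert_X\bigr),
\end{equation*}
and the assumption $f\in X_a$ gives $\lVert f\chi_{A_n}\rVert_X\to 0$, so $\limsup_n\lVert\phi_n\rVert_X\le C\varepsilon\lVert f\rVert_X$; letting $\varepsilon\downarrow 0$ completes the argument.

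The step I expect to be most delicate is precisely the choice of $A_n$: the naive attempt $B_n:=\bigcup_{m\ge n}\{|\phi_m|>\varepsilon\}$ produces the bound $|\phi_n|\le\varepsilon\chi_{\supp f}+2|f|\chi_{B_n}$, but $\chi_{\supp f}$ need not belong to $X$ (axiom~\ref{P4} guarantees this only for sets of finite measure, and $\supp f$ can easily have infinite measure for $f\in X$, for instance in $L^1+L^\infty$). Measuring the deviation of $\phi_m$ against the weight $\varepsilon|f|$ rather than against the absolute threshold $\varepsilon$ keeps the dominant term inside $X$ while preserving the essential property $\chi_{A_n}\to 0$ $\mu$-a.e.
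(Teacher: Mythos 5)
Your proof is correct and follows essentially the same route as the paper, which simply defers to the classical argument of \cite[Chapter~1, Proposition~3.6]{BennettSharpley88}: the same exceptional sets $A_n=\bigcup_{m\ge n}\{|\phi_m|>\varepsilon|f|\}$ with the weighted threshold $\varepsilon|f|$, the same splitting $|\phi_n|\le\varepsilon|f|+2|f|\chi_{A_n}$, and the same choice $g_k=f\chi_{E_k}$ for the converse, with the only (correct) adjustment being the modulus of concavity $C$ in the quasi-triangle inequality.
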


\subsection{Luxemburg representation theorem}

An extremely important result in the theory of r.i.~Banach function spaces it the Luxemburg representation theorem, first obtained in \cite{Luxemburg67}, which allows for a space defined over an arbitrary resonant measure space to be represented a by space defined over $[0,\mu(\mathcal{R})), \lambda)$. Recently, it has been shown that this result extends to the wider class of r.i.~quasi-Banach function spaces (see \cite[Section~3]{MusilovaNekvinda24} for details):

\begin{theorem} \label{TheoremRepresentation}
	Assume that $(\mathcal{R},\mu)$ is resonant, let $\lVert \cdot \rVert_X$ be an r.i.~quasi-Banach function norm on $\mathcal{M}(\mathcal{R},\mu)$, and denote its modulus of concavity by $C_X$. Then there is an r.i.~quasi-Banach function norm $\lVert \cdot \rVert_{\overline{X}}$ on $\mathcal{M}([0,\mu(\mathcal{R})), \lambda)$ such that for every $f \in \mathcal{M}(\mathcal{R},\mu)$ it holds that $\lVert f \rVert_X=\lVert f^* \rVert_{\overline{X}}$. Furthermore, $\lVert \cdot \rVert_{\overline{X}}$ has the property \ref{P5} whenever $\lVert \cdot \rVert_X$ does and the modulus of concavity $C_{\overline{X}}$ (of $\lVert \cdot \rVert_{\overline{X}}$) satisfies $C_{\overline{X}} \leq C_X$ when $(\mathcal{R},\mu)$ is non-atomic and $C_{\overline{X}} \leq 4C_X^2$ otherwise. Finally, when $(\mathcal{R},\mu)$ is non-atomic then $\lVert \cdot \rVert_{\overline{X}}$ is uniquely determined.
\end{theorem}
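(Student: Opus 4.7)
My plan is to split the argument along the two cases of Theorem~\ref{TheoremCharResonance}: $(\mathcal{R}, \mu)$ is either non-atomic, or completely atomic with all atoms of common measure $c>0$. In each case I exhibit an explicit map intertwining $\mathcal{M}(\mathcal{R}, \mu)$ with $\mathcal{M}([0, \mu(\mathcal{R})), \lambda)$ and define $\lVert \cdot \rVert_{\overline{X}}$ by pull-back along it. Most axioms then lift mechanically, with the only real difficulty appearing in the quasi-triangle inequality in the atomic case.

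For the non-atomic case, the $\sigma$-finiteness of $\mu$ combined with non-atomicity yields a measurable map $\phi\colon \mathcal{R} \to [0, \mu(\mathcal{R}))$ with $\mu \circ \phi^{-1} = \lambda$, and I would define
\begin{equation*}
\lVert g \rVert_{\overline{X}} := \lVert g \circ \phi \rVert_X.
\end{equation*}
Since $g \circ \phi$ is equimeasurable with $g$, the value depends only on $g^*$ by r.i.\ of $\lVert \cdot \rVert_X$, which gives both well-definedness and r.i.\ of $\lVert \cdot \rVert_{\overline{X}}$. The axioms \ref{P2}, \ref{P3}, \ref{P4} (and \ref{P5}, when applicable) lift through $g \mapsto g \circ \phi$: this operation is a positive linear lattice homomorphism respecting monotone limits, sends $\chi_E$ to $\chi_{\phi^{-1}(E)}$ with $\mu(\phi^{-1}(E)) = \lambda(E)$, and preserves integrals by the change-of-variables formula. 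The quasi-triangle inherits precisely the modulus of concavity $C_X$ from the linearity $(g_1+g_2) \circ \phi = g_1 \circ \phi + g_2 \circ \phi$. The representation $\lVert f \rVert_X = \lVert f^* \rVert_{\overline{X}}$ is immediate because $f^* \circ \phi$ is equimeasurable with $f$. Uniqueness follows since on a non-atomic resonant space every non-negative $g$ on $[0, \mu(\mathcal{R}))$ is the rearrangement of some $f \in \mathcal{M}(\mathcal{R}, \mu)$, pinning down the value via the representation identity and r.i.\ of any candidate.

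For the completely atomic case, $(\mathcal{R}, \mu)$ may be identified with a subset of $\mathbb{N}$ carrying $c$ times counting measure, so every $f^*$ is necessarily a step function on $[0, \mu(\mathcal{R}))$ with plateau length $c$; this blocks the naive pull-back. I would instead define $\lVert g \rVert_{\overline{X}} := \lVert h_g \rVert_X$, where $h_g(n) := c^{-1}\int_{nc}^{(n+1)c} g^*(s)\,ds$ is viewed as a function on $\mathcal{R}$ via an enumeration of atoms. For $f \in \mathcal{M}(\mathcal{R}, \mu)$, the rearrangement $f^*$ is constant on each $[nc, (n+1)c)$, so $h_{f^*}$ recovers the non-increasing arrangement of $\lvert f \rvert$ and r.i.\ of $\lVert \cdot \rVert_X$ yields the representation identity. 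Axioms \ref{P2}, \ref{P3}, \ref{P4} (and \ref{P5}) transfer via $g \mapsto h_g$ much as in the non-atomic case. The main obstacle is the quasi-triangle: the rearrangement is only sub-additive up to a dilation, $(g_1+g_2)^*(2t) \leq g_1^*(t) + g_2^*(t)$, so $h_{g_1+g_2}$ cannot be bounded by $h_{g_1} + h_{g_2}$ directly but only by a shifted/dilated version. Invoking the atomic analogue of the dilation boundedness (Theorems~\ref{TDRIS}--\ref{TheoremDilationFinMeasure}) together with one application of the quasi-triangle for $\lVert \cdot \rVert_X$, and tracking constants carefully through both, is what produces the stated bound $C_{\overline{X}} \leq 4C_X^2$.
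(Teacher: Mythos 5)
Your construction is precisely the one the paper adopts: it cites \cite[Section~3]{MusilovaNekvinda24} for the proof and reproduces the same two-case construction in Definition~\ref{DefRepreFixed} --- pull-back along a measure-preserving map in the non-atomic case, and averaging of $g^*$ over atom-length intervals in the completely atomic case, with the quasi-triangle inequality there obtained exactly as you indicate from $(g_1+g_2)^*(2t)\leq g_1^*(t)+g_2^*(t)$ plus a (discrete) dilation bound, yielding $4C_X^2$. The argument is sound; the only polish needed is in the uniqueness step, where one should say that every non-negative $g$ on $[0,\mu(\mathcal{R}))$ is \emph{equimeasurable} with some $f\in\mathcal{M}(\mathcal{R},\mu)$ (so that $\lVert g\rVert_{\overline{X}}=\lVert g^*\rVert_{\overline{X}}=\lVert f\rVert_X$ is forced), rather than that $g$ itself is a non-increasing rearrangement.
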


When the space is completely atomic then the representation is not unique. The reason for this is well explained in \cite[Section~3]{MusilovaNekvinda24} but it can be summarised as follow: The original quasinorm only measures the decay of a function, as the functions on atomic spaces do not have blow-ups. Since the functions on $([0,\mu(\mathcal{R})), \lambda)$ do in general have blow-ups, the representation must measure them, but how it does so cannot be uniquely determined by the original quasinorm. Hence, a choice must be made during the construction. (The authors of \cite{MusilovaNekvinda24} chose $L^1$ as the local component, because for this choice their construction when applied on an r.i.~Banach function norm yields the same representation functional as the classical construction of Luxemburg, as presented in \cite{Luxemburg67} or \cite[Chapter~2, Theorem~4.10]{BennettSharpley88}).

Similarly, the uniqueness of the non-atomic case when $\mu(\mathcal{R})< \infty$ is due to the fact that the representation space is considered as over $([0,\mu(\mathcal{R})), \lambda)$. If $([0,\infty), \lambda)$ were to be chosen as the underlying measure space (we construct such a representation space in Definition~\ref{DefRepreFixed} below), then the representation would no longer be unique for the very same reason, only with the roles of blow-ups and decay exchanged.

\section{The construction of a particular representation quasinorm} \label{SectionRepreCorrect}

From this point on, we always assume that the underlying measure space $(\mathcal{R},\mu)$ is resonant.

As is turns out, in the cases when the representation quasinorm is not uniquely determined, the representation of absolute continuity of the quasinorm may only be valid for some of the possible choices. We thus now fix the specific representation quasinorm that we will use in our results. We follow the original construction from \cite[Proof of Theorem~3.1]{MusilovaNekvinda24} with only one difference: It turns out it is more convenient for our purposes to consider representation via r.i.~quasi-Banach function norms over $([0,\infty), \lambda)$, so that they are independent of the measure of the original measure space. This is, of course, mostly a technical change as both the approaches are mostly equivalent, with the only significant difference being that the uniqueness is lost when $(\mathcal{R},\mu)$ is non-atomic and $\mu(\mathcal{R}) < \infty$.

\begin{definition} \label{DefRepreFixed}
	\mbox{}\vspace{\dimexpr-\baselineskip-\topsep} \\
	\begin{enumerate}
		\item \label{DefRepreFixed_i} Let $(\mathcal{R},\mu)$ is non-atomic and consider an arbitrary but fixed measure preserving mapping $\sigma$ from $(\mathcal{R},\mu)$ onto the range of $\mu$ (without $\infty$, if applicable; note that such a mapping always exists by \cite[Lemma~3.2]{MusilovaNekvinda24}). We then consider the operator $T :\mathcal{M}([0,\mu(\mathcal{R})), \lambda) \to \mathcal{M}(\mathcal{R},\mu)$ defined for every $f \in \mathcal{M}([0,\mu(\mathcal{R})), \lambda)$ by
		\begin{equation} \label{TheoremRepresentation:Tna}
			T(f) = f \circ \sigma,
		\end{equation}
		and define the functional $\lVert \cdot \rVert_{\overline{X_0}}$ for every $f \in \mathcal{M}([0,\mu(\mathcal{R})), \lambda)$ by the formula
		\begin{align*}
			&\lVert f \rVert_{\overline{X_0}} = \lVert T(f) \rVert_X.
		\end{align*}
		\item \label{DefRepreFixed_ii} Let $(\mathcal{R},\mu)$ be completely atomic with all atoms having the same measure which we denote by $\beta \in (0, \infty)$. As the set $\mathcal{R}$ is at most countable (given that $(\mathcal{R},\mu)$ is $\sigma$-finite), we choose some arbitrary but fixed numbering of the atoms and identify $(\mathcal{R},\mu)$ with the set $\mathcal{N} = \mathbb{N} \cap [0, \mu(\mathcal{R}))$ equipped with the measure $\nu = \beta m$, the appropriate multiple of the classical counting measure $m$. We then consider the operator $T :\mathcal{M}([0,\nu(\mathcal{N})), \lambda) \to \mathcal{M}(\mathcal{N}, \nu)$, which is defined for every $f \in \mathcal{M}([0,\nu(\mathcal{N})), \lambda)$ by the pointwise formula
		\begin{align} \label{TheoremRepresentation:T}
			T(f)(n) &= \beta^{-1} \int_{\beta n}^{\beta (n+1)} f^* \: d\lambda, & \text{for } n \in \mathcal{N}
		\end{align}
		and define the functional $\lVert \cdot \rVert_{\overline{X_0}}$ for every $f \in \mathcal{M}([0,\mu(\mathcal{R})), \lambda)$ by the formula
		\begin{equation*}
			\lVert f \rVert_{\overline{X_0}} =\left \lVert T(f) \right \rVert_X.
		\end{equation*}
	\end{enumerate}
	Finally, in both cases, we define the he functional $\lVert \cdot \rVert_{\overline{X}}$ for every $f \in \mathcal{M}([0,\infty), \lambda)$ by the formula
	\begin{equation*}
		\lVert f \rVert_{\overline{X}} = \lVert f^* \chi_{[0,\mu(\mathcal{R}))} \rVert_{\overline{X_0}}.
	\end{equation*}
\end{definition}

We would like to stress, that the mapping $\sigma$ and the ordering of atoms used in the above definition are considered to be fixed from now on, even though the resulting functional does not depend on them. This is because we will need to work with them in some of our proofs. Let us now quickly show that our modified functional $\lVert \cdot \rVert_{\overline{X}}$ has the desired properties.

\begin{proposition} \label{PropRepreFixed}
	Assume that $(\mathcal{R},\mu)$ is resonant, let $\lVert \cdot \rVert_X$ be an r.i.~quasi-Banach function norm on $\mathcal{M}(\mathcal{R},\mu)$. Then the functional  $\lVert \cdot \rVert_{\overline{X}}$ constructed in Definition~\ref{DefRepreFixed} is an r.i.~quasi-Banach function norm and satisfies for every $f \in \mathcal{M}(\mathcal{R},\mu)$ that 
	\begin{equation} \label{PropRepreFixed:1}
		\lVert f \rVert_X=\lVert f^* \rVert_{\overline{X}}.
	\end{equation}	
	Furthermore, $\lVert \cdot \rVert_{\overline{X}}$ has the property \ref{P5} if and only if $\lVert \cdot \rVert_X$ does. Finally, when $(\mathcal{R},\mu)$ is non-atomic and $\mu(\mathcal{R}) = \infty$ then $\lVert \cdot \rVert_{\overline{X}}$ is uniquely determined by \eqref{PropRepreFixed:1}.
\end{proposition}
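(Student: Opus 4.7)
The plan is to reduce all claims to the auxiliary functional $\lVert \cdot \rVert_{\overline{X_0}}$, which by inspection coincides with the representation quasinorm constructed in \cite[Proof of Theorem~3.1]{MusilovaNekvinda24}. That reference already supplies the non-trivial content: $\lVert \cdot \rVert_{\overline{X_0}}$ is an r.i.~quasi-Banach function norm on $\mathcal{M}([0,\mu(\mathcal{R})),\lambda)$ which satisfies $\lVert g \rVert_X = \lVert g^*|_{[0,\mu(\mathcal{R}))} \rVert_{\overline{X_0}}$ for all $g \in \mathcal{M}(\mathcal{R},\mu)$ and inherits \ref{P5} from $\lVert \cdot \rVert_X$. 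The task then reduces to showing that the extension $\lVert f \rVert_{\overline{X}} = \lVert f^* \chi_{[0,\mu(\mathcal{R}))} \rVert_{\overline{X_0}}$ to $\mathcal{M}([0,\infty),\lambda)$ preserves these properties.

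The representation identity \eqref{PropRepreFixed:1} is immediate: for $f \in \mathcal{M}(\mathcal{R},\mu)$ we have $f^*(t) = 0$ whenever $t \geq \mu(\mathcal{R})$ and $(f^*)^* = f^*$, hence $\lVert f^* \rVert_{\overline{X}} = \lVert f^*|_{[0,\mu(\mathcal{R}))} \rVert_{\overline{X_0}} = \lVert f \rVert_X$. Most axioms of a quasi-Banach function norm then transfer routinely from $\overline{X_0}$ to $\overline{X}$ via standard properties of the non-increasing rearrangement: homogeneity from $(af)^* = |a| f^*$; positivity from the fact that a non-increasing function vanishing on $[0,\mu(\mathcal{R}))$ vanishes everywhere; the lattice and Fatou properties from $f \leq g \Rightarrow f^* \leq g^*$ and $f_n \uparrow f \Rightarrow f_n^* \uparrow f^*$; rearrangement invariance from the very formula; and \ref{P4} from $\chi_E^* \chi_{[0,\mu(\mathcal{R}))} = \chi_{[0,\min(\lambda(E),\mu(\mathcal{R})))}$, combined with \ref{P4} for $\overline{X_0}$.

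The main subtlety is the quasi-triangle inequality. Using the classical pointwise bound $(f+g)^*(t) \leq f^*(t/2) + g^*(t/2)$ together with the modulus of concavity of $\overline{X_0}$, the matter reduces to showing that the $1/2$-dilation of a non-increasing function can be absorbed into a constant depending only on $\overline{X_0}$. This is exactly what Theorem~\ref{TDRIS} provides in the $\mu(\mathcal{R}) = \infty$ case and Theorem~\ref{TheoremDilationFinMeasure} in the $\mu(\mathcal{R}) < \infty$ case --- the latter having been prepared precisely for this purpose.

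Finally, for the \ref{P5} equivalence, the direction $X \Rightarrow \overline{X}$ combines the Hardy--Littlewood inequality $\int_E f\,\d\lambda \leq \int_0^{\lambda(E)} f^*\,\d\lambda$ with \ref{P5} of $\overline{X_0}$ when $\lambda(E) \leq \mu(\mathcal{R})$; the only extra case $\mu(\mathcal{R}) < \infty < \lambda(E)$ is handled by splitting at $\mu(\mathcal{R})$ and bounding the tail via monotonicity of $f^*$ by $(\lambda(E) - \mu(\mathcal{R})) f^*(\mu(\mathcal{R})/2) \leq \tfrac{2(\lambda(E) - \mu(\mathcal{R}))}{\mu(\mathcal{R})} \int_0^{\mu(\mathcal{R})/2} f^*\,\d\lambda$, which is itself controlled by $\lVert f \rVert_{\overline{X}}$. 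The reverse direction $\overline{X} \Rightarrow X$ follows from the Hardy--Littlewood inequality and the representation identity. Uniqueness in the non-atomic $\mu(\mathcal{R}) = \infty$ case is automatic: then $[0,\mu(\mathcal{R})) = [0,\infty)$, so $\overline{X}$ coincides with $\overline{X_0}$, which is the uniquely determined representation of Theorem~\ref{TheoremRepresentation}.
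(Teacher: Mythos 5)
Your proposal is correct and follows essentially the same route as the paper's proof: reduce everything to the functional $\lVert \cdot \rVert_{\overline{X_0}}$ from \cite[Proof of Theorem~3.1]{MusilovaNekvinda24}, transfer the routine axioms via properties of the non-increasing rearrangement, obtain the quasi-triangle inequality from $(f+g)^* \leq D_{\frac{1}{2}}f^* + D_{\frac{1}{2}}g^*$ together with Theorem~\ref{TheoremDilationFinMeasure}, and prove the \ref{P5} equivalence by the Hardy--Littlewood inequality with a split of the integral at $\mu(\mathcal{R})$. The only (immaterial) deviations are your detour through $f^*(\mu(\mathcal{R})/2)$ where the paper uses $f^*(\mu(\mathcal{R})) \leq \frac{1}{\mu(\mathcal{R})}\int_0^{\mu(\mathcal{R})} f^*\,d\lambda$ directly, and your invocation of Theorem~\ref{TDRIS} in the $\mu(\mathcal{R})=\infty$ case, where the paper simply observes there is nothing to prove.
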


\begin{proof}
	The functional $\lVert \cdot \rVert_{\overline{X_0}}$ is both cases precisely the functional constructed in the proof of Theorem~\ref{TheoremRepresentation} that is presented in \cite[Proof of Theorem~3.1]{MusilovaNekvinda24}, hence it has all the properties listed in said Theorem. It remains only to extend said properties to $\lVert \cdot \rVert_{\overline{X}}$.
	
	We only have to consider the case $\mu(\mathcal{R}) < \infty$ as otherwise there is nothing to prove. The properties \ref{P2}, \ref{P3} and \ref{P4} as well as parts \ref{Q1a} and \ref{Q1b} of the axiom \ref{Q1} are easy consequences of the respective properties of $\lVert \cdot \rVert_{\overline{X_0}}$ and the properties of non-increasing rearrangement. Furthermore, the rearrangement invariance is obvious. To prove the remaining properties we employ a combination of the tools presented in the proofs of \cite[Proposition~3.3 and Theorem~3.4]{Pesa22}.
	
	As for the quasi-triangle inequality, it follows from the properties of non-increasing rearrangement (see e.g.~\cite[Chapter~2, Proposition~1.7]{BennettSharpley88}) and Theorem~\ref{TheoremDilationFinMeasure}, as they imply that
	\begin{align*}
		\lVert f+g \rVert_{\overline{X}} &= \lVert (f+g)^* \chi_{[0,\mu(\mathcal{R}))} \rVert_{\overline{X_0}} \\
			&\leq \lVert (D_{\frac{1}{2}} f^* + D_{\frac{1}{2}} g^*) \chi_{[0,\mu(\mathcal{R}))} \rVert_{\overline{X_0}} \\ 
			&\lesssim \lVert D_{\frac{1}{2}} f^* \chi_{[0,\mu(\mathcal{R}))} \rVert_{\overline{X_0}} + \lVert D_{\frac{1}{2}} g^* \chi_{[0,\mu(\mathcal{R}))} \rVert_{\overline{X_0}} \\
			&\lesssim \lVert f^* \chi_{[0,\mu(\mathcal{R}))} \rVert_{\overline{X_0}} + \lVert g^* \chi_{[0,\mu(\mathcal{R}))} \rVert_{\overline{X_0}}.
	\end{align*}
	
	As for the part concerning the property \ref{P5}, we start by the sufficiency and fix some set $E \subseteq [0, \infty)$ of finite measure. We may, without loss of generality, assume that $\lambda(E) > \mu(\mathcal{R})$, because otherwise the proof is similar but simpler. Then, by Hardy--Littlewood inequality (Theorem~\ref{THLI}), it holds for every $f \in \mathcal{M}_+([0,\infty), \lambda)$ that
	\begin{equation} \label{PropRepreFixed:2}
		\begin{split}
			\int_{E} f \: d\lambda &\leq \int_{0}^{\lambda(E)} f^* \: d\lambda \\
			&= \int_0^{\mu(\mathcal{R})} f^* \: d\lambda + \int_{\mu(\mathcal{R})}^{\lambda(E)} f^* \: d\lambda \\
			&\leq \int_0^{\mu(\mathcal{R})} f^* \: d\lambda + (\lambda(E) - \mu(\mathcal{R})) f^*(\mu(\mathcal{R})) \\
			&\leq \frac{\lambda(E)}{\mu(\mathcal{R})} \int_0^{\mu(\mathcal{R})} f^* \: d\lambda \\
			&\leq \frac{\lambda(E)}{\mu(\mathcal{R})} C_{[0,\mu(\mathcal{R}))} \lVert f^*\chi_{[0,\mu(\mathcal{R}))} \rVert_{\overline{X_0}},
		\end{split}
	\end{equation}
	where $C_{[0,\mu(\mathcal{R}))}$ is the constant from the property \ref{P5} of $\lVert \cdot \rVert_{\overline{X_0}}$ for the set $[0,\mu(\mathcal{R}))$ (which exist by our assumptions that $\mu(\mathcal{R})< \lambda(E) <\infty$ and that $\lVert \cdot \rVert_X$ has the property \ref{P5}).
	
	Finally, the necessity is simpler, as it follows from the Hardy--Littlewood inequality (Theorem~\ref{THLI}) that we have for any $g \in \mathcal{M}_+(\mathcal{R}, \mu)$ and any $E \subseteq \mathcal{R}$ with $\mu(E) < \infty$ that
	\begin{equation*}
		\int_E g \: d\mu \leq \int_0^{\mu(E)} g^* \: d\lambda \leq  C_{[0,\mu(E))} \lVert g^* \rVert_{\overline{X}} = C_{[0,\mu(E))} \lVert g^* \rVert_{\overline{X}_0} = C_{[0,\mu(E))} \lVert g \rVert_{X},
	\end{equation*}
	where $C_{[0,\mu(E))}$ is the constant from the property \ref{P5} of $\lVert \cdot \rVert_{\overline{X}}$ and the second to last equality employs the fact that $g^* = g^*\chi_{[0,\mu(\mathcal{R}))}$.
\end{proof}

Observe, that when $\mu(\mathcal{R}) < \infty$, we have for every $f \in \mathcal{M}([0, \infty), \lambda)$ that
\begin{equation*}
	 \lVert f^* \chi_{(\mu(\mathcal{R}), \infty)} \rVert_{L^{\infty}} = f^*(\mu(\mathcal{R})) \leq \frac{1}{\lVert \chi_{[0, \mu(\mathcal{R}))} \rVert_{\overline{X_0}}} \lVert f^* \chi_{[0, \mu(\mathcal{R}))} \rVert_{\overline{X_0}},
\end{equation*}
which shows that our approach to constructing a functional over $([0,\infty), \lambda)$ from a given functional over $([0,\mu(\mathcal{R})), \lambda)$ can in essence be described as constructing a kind of an r.i.~amalgam space where the original functional serves as the local component while the global component is chosen to be $L^{\infty}$. (Similar observation has been made in \cite[Proposition~3.3]{Pesa22}, the proof of which includes an argument similar to the one we use in \eqref{PropRepreFixed:2}.) Hence, our construction is conceptually similar to the approach used in \cite[Proof of Theorem~3.1]{MusilovaNekvinda24} to construct the representation quasinorm for the case of completely atomic measure. Our motivation for choosing $L^{\infty}$ is also similar: for this choice, the construction applied on r.i.~Banach function spaces yields the same representation space as the classical approach of Luxemburg, as presented in \cite{Luxemburg67} or \cite[Chapter~2, Theorem~4.10]{BennettSharpley88}.

Next, we observe that our choice of representation functional is well compatible with the Hardy--Littlewood--P{\' o}lya principle. Besides the obvious practical applications, this statement also serves as a further argument in favour of choosing $L^1$ as the local component of the representation functional in the case when $(\mathcal{R},\mu)$ is completely atomic.

\begin{proposition} \label{PropRepreHLP}
Assume that $(\mathcal{R},\mu)$ is resonant, let $\lVert \cdot \rVert_X$ be an r.i.~quasi-Banach function norm on $\mathcal{M}(\mathcal{R},\mu)$, and let $X$ be the corresponding r.i.~quasi-Banach function space. Let further $\lVert \cdot \rVert_{\overline{X}}$ be the r.i.~quasi-Banach function norm constructed in Definition~\ref{DefRepreFixed} and let $\overline{X}$ be the corresponding r.i.~quasi-Banach function space. Then  the Hardy--Littlewood--P{\' o}lya principle holds for $\lVert \cdot \rVert_X$ if and only if it holds for $\lVert \cdot \rVert_{\overline{X}}$.
\end{proposition}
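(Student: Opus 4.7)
The plan is to prove both implications by using the operator $T$ from Definition~\ref{DefRepreFixed} to transfer Hardy--Littlewood--P\'olya comparisons between the two spaces. The direction ``$\overline{X}$-HLP implies $X$-HLP'' comes almost for free. Given $h, k \in \mathcal{M}(\mathcal{R},\mu)$ with $h \prec k$, the definition gives $\int_0^t h^* \, d\lambda \leq \int_0^t k^* \, d\lambda$ for every $t > 0$; since $h^*$ and $k^*$ are themselves non-increasing, this says exactly that $h^* \prec k^*$ in $([0,\infty),\lambda)$. Applying the HLP principle for $\lVert \cdot \rVert_{\overline{X}}$ and using \eqref{PropRepreFixed:1} yields $\lVert h \rVert_X = \lVert h^* \rVert_{\overline{X}} \leq \lVert k^* \rVert_{\overline{X}} = \lVert k \rVert_X$.

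For the converse direction, assume HLP holds for $\lVert \cdot \rVert_X$ and fix $f, g \in \mathcal{M}([0,\infty),\lambda)$ with $f \prec g$. Set $F = f^* \chi_{[0,\mu(\mathcal{R}))}$ and $G = g^* \chi_{[0,\mu(\mathcal{R}))}$, both non-increasing. A short computation verifies $F \prec G$ on $[0,\infty)$: for $t \leq \mu(\mathcal{R})$ the inequality $F^{**}(t) \leq G^{**}(t)$ coincides with $f^{**}(t) \leq g^{**}(t)$, and for $t > \mu(\mathcal{R})$ both sides equal $\mu(\mathcal{R})/t$ times $f^{**}(\mu(\mathcal{R}))$ and $g^{**}(\mu(\mathcal{R}))$ respectively, whose comparison is again given by $f \prec g$. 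In the non-atomic case, define $\tilde F = F \circ \sigma$ and $\tilde G = G \circ \sigma$ using the measure-preserving map $\sigma$ from Definition~\ref{DefRepreFixed}; measure preservation yields $\tilde F^* = F$ and $\tilde G^* = G$, so $\tilde F \prec \tilde G$ in $(\mathcal{R},\mu)$, and the HLP hypothesis for $X$ gives $\lVert f \rVert_{\overline{X}} = \lVert \tilde F \rVert_X \leq \lVert \tilde G \rVert_X = \lVert g \rVert_{\overline{X}}$.

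In the completely atomic case, set $\tilde F = T(F)$ and $\tilde G = T(G)$. Both are non-increasing sequences on $\mathcal{N}$, so their non-increasing rearrangements are the step functions with value $\tilde F(n)$ (respectively $\tilde G(n)$) on $[\beta n, \beta(n+1))$, and a direct calculation gives $\int_0^{\beta k} \tilde F^* \, d\lambda = \int_0^{\beta k} F \, d\lambda$ at every grid point $\beta k$, and analogously for $\tilde G$. Combined with $F \prec G$ this yields $\tilde F^{**}(\beta k) \leq \tilde G^{**}(\beta k)$ for every $k$. The main technical point, which I expect to be the principal obstacle, is promoting this to all $t > 0$; I would handle it by noting that the function $\phi(t) = \int_0^t(\tilde G^* - \tilde F^*) \, d\lambda$ is continuous and piecewise linear with breakpoints only at the $\beta k$, so its non-negativity on the grid propagates to the whole half-line by linearity on each segment. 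Consequently $\tilde F \prec \tilde G$ in $(\mathcal{N}, \nu)$, and the HLP hypothesis for $X$ completes the proof.
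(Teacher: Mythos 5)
Your proposal is correct and follows essentially the same route as the paper: the easy direction via $h \prec k \Leftrightarrow h^* \prec k^*$, and the converse by transferring $f \prec g$ through the operator $T$, with the atomic case handled by checking the Hardy--Littlewood--P\'olya comparison at the grid points $\beta k$ and then extending to all $t$ by continuity and piecewise linearity of $t \mapsto \int_0^t \tilde F^{\,*}\,d\lambda$. The only cosmetic difference is that you isolate the intermediate step $F \prec G$ explicitly, which the paper leaves implicit.
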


\begin{proof}	
	The sufficiency is obvious, as clearly $f \prec g$ (with $f, g \in \mathcal{M}(\mathcal{R},\mu)$) if and only if $f^* \prec g^*$.
	
	Similarly, when $(\mathcal{R},\mu)$ is non-atomic and $T$ is as in part \ref{DefRepreFixed_i} of Definition~\ref{DefRepreFixed}, we have for all $h \in \mathcal{M}([0,\infty), \lambda)$ that $T(h^* \chi_{[0,\mu(\mathcal{R}))})^* = h^* \chi_{[0,\mu(\mathcal{R}))}$, from which the necessity follows.
	
	Finally, in the case when $(\mathcal{R},\mu)$ is completely atomic and $T$ is as in part \ref{DefRepreFixed_ii}, the argument for necessity is slightly more complicated.
	
	Let $h \in \overline{X}$ and $t \in (0, \infty)$ and put
	\begin{align*}
		t_0 &= \min\{t, \mu(\mathcal{R})\} \\
		n_t &= \sup \{ n \in \mathbb{N} \cup \{ -1 \}; \; (n+1) \beta \leq t_0\}, \\
		\mathcal{N}_t &= \{n \in \mathbb{N}; \; n \leq n_t\}.
	\end{align*}
	Then we have
	\begin{equation} \label{PropRepreHLP:E1}
		\int_{0}^{t} T(h^* \chi_{[0,\mu(\mathcal{R}))})^* = \sum_{n \in \mathcal{N}_t} \int_{n \beta}^{(n+1) \beta} h^* \: d\lambda + \frac{t_0 - (n_t+1) \beta}{\beta} \int_{(n_t +1) \beta}^{(n_t+2) \beta} h^* \: d\lambda.
	\end{equation}
	Note that in the case when $(n_t +1) \beta = \mu(\mathcal{R})$ the last integral ``sees values outside of $[0,\mu(\mathcal{R}))$'', but this is not a problem as the integral is in this case multiplied by zero. When $t$ is in the range of $\mu$, then \eqref{PropRepreHLP:E1} further simplifies to
	\begin{equation} \label{PropRepreHLP:E2}
		\int_{0}^{t} T(h^* \chi_{[0,\mu(\mathcal{R}))})^* = \sum_{n \in \mathcal{N}_t} \int_{n \beta}^{(n+1) \beta} h^* \: d\lambda = \int_0^{t} h^* \: d\lambda.
	\end{equation}
	
	Consider now some functions $f,g \in \overline{X}$ satisfying $f \prec g$. It follows from \eqref{PropRepreHLP:E2} that
	\begin{align} \label{PropRepreHLP:E3}
		\int_{0}^{t} T(f^* \chi_{[0,\mu(\mathcal{R}))})^* \: d\lambda &\leq \int_{0}^{t} T(g^* \chi_{[0,\mu(\mathcal{R}))})^* \: d\lambda &\text{for } t \text{ in the range of } \mu.
	\end{align}
	Moreover, it follows from \eqref{PropRepreHLP:E1} that the integrals we are comparing in \eqref{PropRepreHLP:E3}, when considered as functions of $t$, are continuous; linear on the intervals $[n \beta, (n+1) \beta]$, where $(n+1) \beta \leq \mu(\mathcal{R})$; and constant on $[\mu(\mathcal{R}), \infty)$ (when $\mu(\mathcal{R}) < \infty$). Hence, the same estimate as in \eqref{PropRepreHLP:E3} also holds for all other $t \in (0, \infty)$ and the necessity follows.
\end{proof}

Note that the proof of necessity in the case of completely atomic measure space depends heavily on our choice of $L^1$ as the local component of the representation quasinorm, as this choice is what allows us to perform the computation in \eqref{PropRepreHLP:E2}.

Finally, let us show that our construction is well compatible with the concept of associate spaces. This result is slightly tangential to our purposes, but it does provide some understanding of the context surrounding Theorem~\ref{TheoremRepreACqN}. We also believe that it is interesting in its own right and that it should be included to make the exposition complete.

\begin{proposition} \label{PropRepreAS}
	Assume that $(\mathcal{R},\mu)$ is resonant, let $\lVert \cdot \rVert_X$ be an r.i.~quasi-Banach function norm on $\mathcal{M}(\mathcal{R},\mu)$, and let $X$ be the corresponding r.i.~quasi-Banach function space. Let further $\lVert \cdot \rVert_{\overline{X}}$ be the r.i.~quasi-Banach function norm constructed in Definition~\ref{DefRepreFixed} and let $\overline{X}$ be the corresponding r.i.~quasi-Banach function space. Finally, denote by $\lVert \cdot \rVert_{X'}$ and $\lVert \cdot \rVert_{\left ( \overline{X} \right )'}$ the respective associate norms (on $\mathcal{M}(\mathcal{R},\mu)$ and $\mathcal{M}([0,\infty), \lambda)$, respectively). Then it holds for every $f \in \mathcal{M}(\mathcal{R},\mu)$ that
	\begin{equation} \label{PropRepreAS:E1}
		\lVert f \rVert_{X'} = \lVert f^* \rVert_{\left ( \overline{X} \right )'}.
	\end{equation}
\end{proposition}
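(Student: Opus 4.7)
The plan is to establish \eqref{PropRepreAS:E1} by a direct comparison of the two defining suprema. Since $(f^*)^* = f^*$, we may rewrite
\begin{equation*}
\lVert f^* \rVert_{(\overline{X})'} = \sup_{\lVert h \rVert_{\overline{X}} \leq 1} \int_0^{\infty} f^* h^* \: d\lambda,
\end{equation*}
and the task reduces to constructing a value-preserving correspondence between admissible test functions $g \in \mathcal{M}(\mathcal{R},\mu)$ with $\lVert g \rVert_X \leq 1$ and admissible $h \in \mathcal{M}([0,\infty), \lambda)$ with $\lVert h \rVert_{\overline{X}} \leq 1$.

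For the inequality $\lVert f \rVert_{X'} \leq \lVert f^* \rVert_{(\overline{X})'}$, given $g$ with $\lVert g \rVert_X \leq 1$ I would set $h := g^*$, viewed as a function on $[0,\infty)$. Since $g^*$ is non-increasing and supported in $[0, \mu(\mathcal{R}))$, we have $h^* = g^*$, and unwinding the definition of $\lVert \cdot \rVert_{\overline{X}}$ together with \eqref{PropRepreFixed:1} yields
\begin{equation*}
\lVert h \rVert_{\overline{X}} = \lVert g^* \chi_{[0, \mu(\mathcal{R}))} \rVert_{\overline{X_0}} = \lVert g^* \rVert_{\overline{X}} = \lVert g \rVert_X \leq 1.
\end{equation*}
Thus $\int_0^\infty f^* g^* \: d\lambda = \int_0^\infty f^* h^* \: d\lambda \leq \lVert f^* \rVert_{(\overline{X})'}$, and taking the supremum over $g$ settles this direction.

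For the converse, given admissible $h$, the natural candidate is $g := T(h^* \chi_{[0, \mu(\mathcal{R}))})$, which satisfies $\lVert g \rVert_X = \lVert h \rVert_{\overline{X}} \leq 1$ by construction. In the non-atomic case the measure-preserving property of $\sigma$ yields $g^* = h^* \chi_{[0, \mu(\mathcal{R}))}$, whence $\int_0^\infty f^* g^* \: d\lambda = \int_0^\infty f^* h^* \: d\lambda$ because $f^*$ is itself supported in $[0, \mu(\mathcal{R}))$. The completely atomic case is the main obstacle and requires a short direct computation: here $g(n) = \beta^{-1} \int_{n\beta}^{(n+1)\beta} h^* \: d\lambda$, so $g^*$ is the step function with value $g(n)$ on $[n\beta, (n+1)\beta)$. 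Since $f^*$ is likewise constant on these intervals, with value $c_n$ say, we may pull it out of the average to obtain
\begin{equation*}
\int_0^\infty f^* g^* \: d\lambda = \beta \sum_{n \in \mathcal{N}} c_n g(n) = \sum_{n \in \mathcal{N}} c_n \int_{n\beta}^{(n+1)\beta} h^* \: d\lambda = \int_0^\infty f^* h^* \: d\lambda,
\end{equation*}
where the last equality uses that $f^*$ vanishes outside $[0, \mu(\mathcal{R}))$. This furnishes the opposite inequality and hence \eqref{PropRepreAS:E1}.
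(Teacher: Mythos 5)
Your argument is correct and follows essentially the same route as the paper: the easy inequality comes from the fact that each admissible $g$ contributes $g^*$ as an admissible test function for $\lVert f^* \rVert_{(\overline{X})'}$, and the converse is obtained by pushing an admissible $h$ back via $g = T(h^* \chi_{[0,\mu(\mathcal{R}))})$, with the non-atomic case handled through the measure-preserving map and the atomic case through the observation that $f^*$ is constant on the intervals $[n\beta,(n+1)\beta)$, so averaging $h^*$ over them does not change the pairing. The paper's proof is the same computation, merely stating the first inequality as an inclusion of the two index sets of the suprema rather than writing out the correspondence.
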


We stress that we do not claim that $\lVert \cdot \rVert_{\left ( \overline{X} \right )'}$ is the same functional as $\lVert \cdot \rVert_{\overline{X'}}$. Indeed, such an equality does not hold in general, that is, it holds if and only if $(\mathcal{R},\mu)$ is non-atomic and of infinite measure. This is because in the case of atomic measure, the local component of $\overline{X}$ is $L^1$ and thus the local component of its associate space is $(L^1)' = L^{\infty}$ (this can be shown in a manner similar to the proof of \cite[Theorem~3.5]{Pesa22}). Similarly, when $\mu(\mathcal{R}) < \infty$, then the global component of $\overline{X}$ is $L^{\infty}$ and thus the global component of its associate space is $L^1$. Hence, in either case, $\lVert \cdot \rVert_{\left ( \overline{X} \right )'}$ is a representation of $\lVert \cdot \rVert_{X'}$ that is distinct of the one constructed in Definition~\ref{DefRepreFixed}.

Let us also note that in the context of r.i.~Banach function spaces a more abstract version of this statement is known, see e.g.~\cite[Chapter~2, Theorem~4.10]{BennettSharpley88}.

\begin{proof}[Proof of Proposition~\ref{PropRepreAS}]
	It is clear from the definition that we have for every fixed $f \in \mathcal{M}(\mathcal{R},\mu)$
	\begin{equation*}
		\lVert f \rVert_{X'} \leq \lVert f^* \rVert_{\left ( \overline{X} \right )'},
	\end{equation*}
	as the supremum defining the functional on the right-hand side considers (in general) a larger set of functions than that at the left-hand side. Proving the inverse inequality will, as usual, require us to distinguish the two cases of resonant measure spaces.
	
	When $(\mathcal{R},\mu)$ is non-atomic, then we put $T$ is as in part \ref{DefRepreFixed_i} of Definition~\ref{DefRepreFixed} and observe that it holds for every $g \in \overline{X}$, $\lVert g \rVert_{\overline{X}} \leq 1$ that $\lVert T(g^* \chi_{[0, \mu(\mathcal{R}))}) \rVert_X = \lVert g \rVert_{\overline{X}} \leq 1$ while $(T(g^* \chi_{[0, \mu(\mathcal{R}))}))^* = g^* \chi_{[0, \mu(\mathcal{R}))}$ and thus
	\begin{equation*}
		\int_0^{\infty} f^* g^* \: d\lambda = \int_0^{\mu(\mathcal{R})} f^* g^* \: d\lambda = \int_0^{\infty} f^* (T(g^* \chi_{[0, \mu(\mathcal{R}))}))^* \: d\lambda.
	\end{equation*}
	This establishes \eqref{PropRepreAS:E1}.
	
	The case of completely atomic measure is similar. We put $T$ is as in part \ref{DefRepreFixed_ii} of Definition~\ref{DefRepreFixed} and observe that it again holds for every $g \in \overline{X}$, $\lVert g \rVert_{\overline{X}} \leq 1$ that $\lVert T(g^* \chi_{[0, \mu(\mathcal{R}))}) \rVert_X = \lVert g \rVert_{\overline{X}} \leq 1$. The difference is that in this case the non-increasing rearrangement of $T(g^* \chi_{[0, \mu(\mathcal{R}))})$ does not have to coincide with $g^* \chi_{[0, \mu(\mathcal{R}))}$. However, since $f^*$ is constant on the intervals $[n \beta, (n+1) \beta)$, where $n \in \mathbb{N}$, $(n+1) \beta \leq \mu(\mathcal{R})$, we may still compute
	\begin{equation*}
		\begin{split}
			\int_0^{\infty} f^* g^* \: d\lambda &= \int_0^{\mu(\mathcal{R})} f^* g^* \: d\lambda \\
			&= \sum_{\substack{n \in \mathbb{N} \\ (n+1) \beta \leq \mu(\mathcal{R})}} f^*(n) \int_{\beta n}^{\beta (n+1)} g^* \: d\lambda \\
			&= \sum_{\substack{n \in \mathbb{N} \\ (n+1) \beta \leq \mu(\mathcal{R})}} \beta f^*(n) (T(g^* \chi_{[0, \mu(\mathcal{R}))}))^*(n) \\
			&= \int_0^{\infty} f^* (T(g^* \chi_{[0, \mu(\mathcal{R}))}))^* \: d\lambda,
		\end{split}
	\end{equation*}
	which again establishes \eqref{PropRepreAS:E1}.
\end{proof}

\section{Representation of the absolute continuity of the quasinorm} \label{SectionRepreACqN}

We continue working under the assumption that the underlying measure space $(\mathcal{R},\mu)$ is resonant.

\begin{proposition} \label{PropACNimpliesACR}
	Let $\lVert \cdot \rVert_X$ be an r.i.~quasi-Banach function norm and let $X$ be the corresponding r.i.~quasi-Banach function space. If $f \in X$ has an absolutely continuous quasinorm (i.e.~$f \in X_a$), then it satisfies
	\begin{equation*}
		\lim_{t \to \infty} f^*(t) = 0.
	\end{equation*}
\end{proposition}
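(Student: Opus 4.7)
The approach is by contradiction. Suppose $\lim_{t \to \infty} f^*(t) = L > 0$, where $L \in (0, \infty]$, and fix any $\varepsilon \in (0, L) \cap (0, \infty)$. Since $f^*$ is non-increasing, this gives $f^*(t) > \varepsilon$ for every $t \in (0, \infty)$, which by the very definition of $f^*$ forces the distribution function to satisfy $f_*(\varepsilon) = \mu(\{\lvert f \rvert > \varepsilon\}) = \infty$; in particular, we must have $\mu(\mathcal{R}) = \infty$ (the conclusion being trivial otherwise).

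Next, I would exploit the $\sigma$-finiteness of $(\mathcal{R}, \mu)$ to build a witnessing sequence. Writing $\mathcal{R} = \bigcup_{k \in \mathbb{N}} R_k$ as an increasing union of sets with $\mu(R_k) < \infty$ and setting $E_k = \mathcal{R} \setminus R_k$, one has $\chi_{E_k} \to 0$ pointwise $\mu$-a.e., so the hypothesis $f \in X_a$ gives $\lVert f \chi_{E_k} \rVert_X \to 0$ as $k \to \infty$. On the other hand, since $\mu(R_k) < \infty$ while $\mu(\{\lvert f \rvert > \varepsilon\}) = \infty$, the set $A_k := \{\lvert f \rvert > \varepsilon\} \cap E_k$ has infinite measure for every $k$; consequently $(f \chi_{E_k})_*(\varepsilon) = \mu(A_k) = \infty$, which is equivalent to $(f \chi_{E_k})^*(t) \geq \varepsilon$ for all $t \in (0, \infty)$.

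The main step is to convert this uniform lower bound on the rearrangement into a uniform lower bound on the quasinorm. Fixing any $T \in (0, \infty)$ one has the pointwise estimate $(f \chi_{E_k})^* \geq \varepsilon \chi_{[0, T]}$, so combining the representation identity of Proposition~\ref{PropRepreFixed} with the lattice axiom \ref{P2} yields
\begin{equation*}
    \lVert f \chi_{E_k} \rVert_X = \lVert (f \chi_{E_k})^* \rVert_{\overline{X}} \geq \varepsilon \, \lVert \chi_{[0, T]} \rVert_{\overline{X}}.
\end{equation*}
The right-hand side is a strictly positive constant independent of $k$ (positivity by axiom \ref{Q1b}, finiteness by axiom \ref{P4}), which contradicts $\lVert f \chi_{E_k} \rVert_X \to 0$.

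I do not anticipate any serious obstacle; every step reduces to a direct application of the axioms together with the representation theorem. If one prefers to avoid invoking the representation, the key step can equivalently be carried out inside $(\mathcal{R}, \mu)$ itself by selecting, via $\sigma$-finiteness, a subset $B_k \subseteq A_k$ equimeasurable with some fixed reference set $B_0$ of positive finite measure; the lattice property and rearrangement invariance then yield the uniform lower bound $\lVert f \chi_{E_k} \rVert_X \geq \varepsilon \, \lVert \chi_{B_0} \rVert_X > 0$ in the same spirit.
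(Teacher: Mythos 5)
Your proof is correct and follows essentially the same route as the paper's: both arguments exhaust $\mathcal{R}$ by finite-measure sets $R_k$, observe that the level set $\{\lvert f\rvert>\varepsilon\}$ has infinite measure and therefore meets each $E_k=\mathcal{R}\setminus R_k$ in a set of infinite measure, and then use rearrangement invariance to produce a uniform positive lower bound on $\lVert f\chi_{E_k}\rVert_X$, contradicting absolute continuity. The paper obtains the lower bound directly in $\mathcal{R}$ via $\lVert\chi_{F\cap E_k}\rVert_X=\lVert\chi_F\rVert_X>0$ rather than passing through the representation space, which is exactly the alternative you sketch in your final remark.
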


\begin{proof}
	Assume contrary, then there is some $\alpha \in (0, \infty)$ and $F \subseteq \mathcal{R}$ such that $\mu(F) = \infty$ and $\lvert f \rvert > \alpha \chi_F$. 
	
	Consider now the sequence of sets $R_k \subseteq \mathcal{R}$ such that $\mu(\mathcal{R}_k) < \infty$, $R_k \subseteq R_{k+1}$, and $\bigcup_{k = 0}^{\infty} R_k = \mathcal{R}$. The sets $E_k = \mathcal{R} \setminus R_k$ satisfy $\chi_{E_k} \to 0$ $\mu$-a.e.~as $k \to \infty$, but clearly $\mu(F \cap E_k) = \infty$ and thus the rearrangement invariance of $\lVert \cdot \rVert_X$ implies (together with its properties \ref{P1} and \ref{P2})
	\begin{equation*}
		\lVert f \chi_{E_k} \rVert_X \geq \alpha \lVert \chi_F \chi_{E_k} \rVert_X = \alpha \lVert \chi_F \rVert_X > 0.
	\end{equation*}
\end{proof}

\begin{theorem} \label{TheoremRepreACqN}
	Let $\lVert \cdot \rVert_X$ be an r.i.~quasi-Banach function norm and let $X$ be the corresponding r.i.~quasi-Banach function space. Let $\lVert \cdot \rVert_{\overline{X}}$ be the r.i.~quasi-Banach function norm constructed in Definition~\ref{DefRepreFixed} and let $\overline{X}$ be the corresponding quasi-Banach function space. Then it holds for every function $f \in X$ that $f \in X_a$ if and only if $f^* \in \left ( \overline{X} \right )_a$.
\end{theorem}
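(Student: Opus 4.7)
The argument naturally splits into the two implications of the biconditional, and, in the harder direction, into the two alternatives distinguished by Theorem~\ref{TheoremCharResonance}. Throughout, the representation identity $\lVert g \rVert_X = \lVert g^* \rVert_{\overline{X}}$ from Proposition~\ref{PropRepreFixed} is the workhorse.

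For the sufficient direction, assume $f^* \in \left(\overline{X}\right)_a$ and pick any $E_k \subseteq \mathcal{R}$ with $\chi_{E_k} \to 0$ $\mu$-a.e. Applying Proposition~\ref{PropACNimpliesACR} to $f^*$ inside $\overline{X}$ yields $\lim_{t \to \infty} f^*(t) = 0$, and therefore Proposition~\ref{PropACR} forces $(f \chi_{E_k})^*(t) \to 0$ for every $t > 0$. Since $(f \chi_{E_k})^* \leq f^*$ pointwise and $f^* \in \left(\overline{X}\right)_a$, the abstract dominated convergence theorem (Proposition~\ref{PropDomConv}) applied in $\overline{X}$ delivers $\lVert (f\chi_{E_k})^* \rVert_{\overline{X}} \to 0$; by the representation this equals $\lVert f\chi_{E_k} \rVert_X$, so $f \in X_a$.

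For the converse, assume $f \in X_a$ and pick an arbitrary sequence $F_k \subseteq [0,\infty)$ with $\chi_{F_k} \to 0$ $\lambda$-a.e.; the target is $\lVert f^* \chi_{F_k} \rVert_{\overline{X}} \to 0$. When $(\mathcal{R},\mu)$ is non-atomic, I would invoke the classical layer-cake construction (standard but not explicitly recorded in the preliminaries) to produce a measure-preserving map $\tau_f \colon \supp f \to [0, \mu(\supp f))$ satisfying $|f| = f^* \circ \tau_f$ $\mu$-a.e. Setting $E_k = \tau_f^{-1}(F_k)$, measure preservation ensures $\chi_{E_k} \to 0$ $\mu$-a.e., while the identity $|f|\chi_{E_k} = (f^*\chi_{F_k}) \circ \tau_f$ shows $f\chi_{E_k}$ and $f^*\chi_{F_k}$ to be equimeasurable. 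Rearrangement invariance together with the representation gives $\lVert f^*\chi_{F_k} \rVert_{\overline{X}} = \lVert f\chi_{E_k} \rVert_X$, and the right-hand side tends to zero by $f \in X_a$. When $(\mathcal{R},\mu)$ is completely atomic, I would identify it with $(\mathcal{N},\nu)$ as in Definition~\ref{DefRepreFixed}\ref{DefRepreFixed_ii} and, using the operator $T$ defined there, rewrite the target as $\lVert f^*\chi_{F_k} \rVert_{\overline{X}} = \lVert a_k \rVert_X$, where $a_k(n) = \beta^{-1}\int_{n\beta}^{(n+1)\beta} (f^*\chi_{F_k})^* \, d\lambda$. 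Proposition~\ref{PropACR} gives $(f^*\chi_{F_k})^*(t) \to 0$ for each $t$, and since $\int_{n\beta}^{(n+1)\beta} f^* \, d\lambda < \infty$ for each $n$ (otherwise $\lVert f \rVert_X = \lVert f^* \rVert_{\overline{X}}$ would be infinite), the classical dominated convergence theorem yields $a_k(n) \to 0$ for every $n$. The pointwise bound $(f^*\chi_{F_k})^* \leq f^*$ forces $a_k \leq a$, where $a(n) = \beta^{-1}\int_{n\beta}^{(n+1)\beta} f^* \, d\lambda$ is a non-increasing reordering of $|f|$ on $(\mathcal{N},\nu)$; picking a bijection $\pi \colon \mathcal{N} \to \mathcal{N}$ with $a = |f| \circ \pi$ and exploiting rearrangement invariance transfers absolute continuity from $f$ to $a$. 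A final appeal to Proposition~\ref{PropDomConv} in $X$, with dominator $a \in X_a$, concludes $\lVert a_k \rVert_X \to 0$.

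The principal obstacle is the necessary direction, where one must manufacture, out of a decaying sequence of subsets of $[0,\infty)$, a decaying sequence of subsets of $\mathcal{R}$ on which the hypothesis $f \in X_a$ can be directly exerted. In the non-atomic case this is achieved by the layer-cake map $\tau_f$, whose invocation is the linchpin of the whole argument. The completely atomic case circumvents the need for such a map by passing to the abstract dominated convergence theorem inside the explicit discrete representation, but then the burden shifts to verifying that rearranging $|f|$ on the atomic space preserves membership in $X_a$, a step which relies on the bijection $\pi$ and has no non-atomic counterpart in this proof.
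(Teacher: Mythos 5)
Your proof is correct and follows essentially the same route as the paper's: the same dominated-convergence argument for sufficiency, the same measure-preserving (layer-cake) map from $\supp f$ onto $\supp f^*$ in the non-atomic case, and the same averaging-plus-reordering argument in the atomic case (your bijection $\pi$ plays exactly the role of the paper's re-enumeration $\widetilde{e_n}$ of the atoms). The one point you should make explicit is that the existence of $\tau_f$ with $\lvert f \rvert = f^* \circ \tau_f$, the existence of $\pi$ with $a = \lvert f \rvert \circ \pi$, and your appeal to Proposition~\ref{PropACR} in the necessity direction all require $\lim_{t \to \infty} f^*(t) = 0$, which in that direction must first be extracted from the hypothesis $f \in X_a$ via Proposition~\ref{PropACNimpliesACR} (you invoke that proposition only in the sufficiency direction).
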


\begin{proof}
	It follows from Proposition~\ref{PropACNimpliesACR} that if either side of the desired equivalence holds, then 
	\begin{equation} \label{TheoremRepreACqN:E0}
		\lim_{t \to \infty} f^*(t) = 0.
	\end{equation}
	
	Assume first that $f^* \in \left ( \overline{X} \right )_a$ and that we have a sequence of sets $E_k \subseteq \mathcal{R}$ such that $\chi_{E_k} \to 0$ $\mu$-a.e. Since $(f\chi_{E_k})^* \leq f^*$, \eqref{TheoremRepreACqN:E0} and the Propositions~\ref{PropACR} and \ref{PropDomConv} together imply that
	\begin{align*}
		\lVert f\chi_{E_k} \rVert_X &= \lVert (f\chi_{E_k})^* \rVert_{\overline{X}} \to 0 & \text{as } k \to \infty.
	\end{align*}
	
	Assume now that $f \in X_a$. We need perform the proof separately for the two cases of resonant measure spaces, i.e.~non-atomic spaces and completely atomic spaces.
	
	We first consider the case of non-atomic measure. By the means of \cite[Chapter~2, Corollary~7.6]{BennettSharpley88}, we obtain from \eqref{TheoremRepreACqN:E0} that there is a measure preserving mapping $\sigma_f$ from the support of $f$ onto the support of $f^*$ such that $\lvert f \rvert = f^* \circ \sigma_f$ on the support of $f$. We define an operator $T_f :\mathcal{M}([0,\infty), \lambda) \to \mathcal{M}(\mathcal{R},\mu)$ defined for every $g \in \mathcal{M}([0,\infty), \lambda)$ by
	\begin{align*}
		T_f(g) &= 
		\begin{cases}
			g \circ \sigma_f &\text{on } \supp f, \\
			0 &\text{elsewhere.}
		\end{cases} 
	\end{align*}
	Since $\sigma_f$ is measure-preserving, it is clear that it holds for any function $g \in \mathcal{M}([0,\infty), \lambda)$ satisfying $\supp g \subseteq \supp f^*$ that $(T_f(g))^* = g^*$ and consequently
	\begin{align} \label{TheoremRepreACqN:E1}
		\lVert g \rVert_{\overline{X}} = \lVert T_f(g) \rVert_X. 
	\end{align}
	
	Now, consider some sequence of sets $E_k \subseteq [0, \infty)$ such that $\chi_{E_k} \rightarrow 0$ $\lambda$-a.e. Clearly, the sets $\widetilde{E_k} = \sigma_f^{-1} (E_k \cap \supp f^*)$ satisfy $\chi_{\widetilde{E_k}} \rightarrow 0$ $\mu$-a.e. Hence, we may apply \eqref{TheoremRepreACqN:E1} together with our assumption $f \in X_a$ to compute
	\begin{align*}
		\lVert f^* \chi_{E_k} \rVert_{\overline{X}} &= \lVert f^* \chi_{E_k\cap \supp f^*} \rVert_{\overline{X}} = \lVert T_f(f^* \chi_{E_k\cap \supp f^*}) \rVert_X = \lVert \lvert f \rvert \chi_{\widetilde{E_k}} \rVert_X \to 0 &\text{as } n \to \infty.
	\end{align*}
	
	Consider now the case when $(\mathcal{R}, \mu)$ is completely atomic with all atoms having the same measure (denoted $\beta \in (0, \infty)$). It follows from \eqref{TheoremRepreACqN:E0} that there exists some enumeration of the atoms, denoted $\widetilde{e_n}$, for which it holds that 
	\begin{equation*}
		\lvert f \rvert (\widetilde{e_n}) = f^*(\beta n).
	\end{equation*}
	We stress that this numbering of atoms depends on $f$. 
	
	We now consider the operator $T_f :\mathcal{M}([0, \infty), \lambda) \to \mathcal{M}(\mathcal{R}, \mu)$, which is defined for every $g \in \mathcal{M}([0,\infty), \lambda)$ by the pointwise formula
	\begin{align} \label{TheoremRepreACqN:E2}
		T_f(g)(\widetilde{e_n}) &= \beta^{-1} \int_{\beta n}^{\beta (n+1)} g^* \: d\lambda, & \text{for } n \in \mathbb{N} \cap [0, \mu(\mathcal{R})).
	\end{align}
	Clearly, the difference between the operators $T_f$ defined in \eqref{TheoremRepreACqN:E2} and the operator $T$ defined in \eqref{TheoremRepresentation:T} is precisely that (in general) each uses a different enumeration of the atoms. Hence, it is not difficult to verify that we have for every $g \in \mathcal{M}([0,\infty), \lambda)$ that $(T_f(g))^* = (T(g))^*$ and thus
	\begin{equation} \label{TheoremRepreACqN:E3}
		\lVert T_f(g) \rVert_X = \lVert T(g) \rVert_X = \lVert g \rVert_{\overline{X}}.
	\end{equation}
	
	Let us now fix some sequence of sets $E_k \subseteq [0, \infty)$ such that $\chi_{E_k} \rightarrow 0$ $\lambda$-a.e. Then we have by \eqref{TheoremRepreACqN:E0} and Proposition~\ref{PropACR} that 
	\begin{align*}
		\lim_{k \to \infty} (f^* \chi_{E_k})^*(t) &= 0 & \text{for } t \in (0, \infty).
	\end{align*}
	Consequently, the Lebesgue dominated convergence theorem implies for every $n \in \mathbb{N} \cap [0, \mu(\mathcal{R}))$ that
	\begin{align} \label{TheoremRepreACqN:E4}
		\beta^{-1} \int_{\beta n}^{\beta (n+1)}  (f^* \chi_{E_k})^* \: d\lambda &\to 0 &\textup{as } k \to \infty.
	\end{align}
	Here, the functions $f^*(\beta n) \chi_{[\beta n, \beta (n+1))}$ serve as the respective majorants (recall that $f^*$ is constant on the relevant intervals). We have thus shown that
	\begin{align*}
		\lvert f \rvert \geq T_f (f^* \chi_{E_k}) &\to 0 \text{ $\mu$-a.e.} &\textup{as } k \to \infty.
	\end{align*} 
	It remains to combine Proposition~\ref{PropDomConv} with \eqref{TheoremRepreACqN:E3} to compute
	\begin{align*}
		\lVert f^* \chi_{E_k} \rVert_{\overline{X}} = \lVert T_f (f^* \chi_{E_k}) \rVert_X &\to 0 &\textup{as } k \to \infty.
	\end{align*}
\end{proof}

Let us now justify our claim that the validity of the theorem in the cases when the representation functional is not uniquely determined depends on the precise choice that is considered. As can be seen from the proof above, problems can only arise in relation to the necessity part of the statement (i.e.~$f \in X_a \implies f^* \in \left ( \overline{X} \right )_a$) in the case when the underlying measure spaces is completely atomic. Indeed, the proof of sufficiency is valid for every possible choice of representation, while in the case of necessity for non-atomic finite measure, the choice of the global component of the representation quasinorm plays no role as none of its properties are used. This corresponds to the fact, that the non-increasing rearrangement of a function defined on $(\mathcal{R}, \mu)$ is constant zero on the interval $(\mu(\mathcal{R}), \infty)$. 

On the other hand, in the case of atomic measure, the non-increasing rearrangements of functions defined on  $(\mathcal{R}, \mu)$ are constant on $[0, 1)$, which is not enough to guarantee absolute continuity of the quasinorm for every possible choice of the local component of the representation quasinorm. As the construction in Definition~\ref{DefRepreFixed} employs $L^1$ for this role, we may obtain \eqref{TheoremRepreACqN:E4} via the Lebesgue dominated convergence theorem. In fact, this step could be performed for any choice of the local component for which simple functions have absolutely continuous quasinorm, that is, for every r.i.~quasi-Banach function space other than $L^{\infty}$ (see \cite[Theorem~4.15]{MusilovaNekvinda24} for the precise meaning and the justification of this last claim). However, this sole counterexample is rather significant for the following reason: when $\lVert \cdot \rVert_X$ satisfies \ref{P5}, then we have shown in Proposition~\ref{PropRepreAS} that $\left ( \overline{X} \right )'$, the associate space of $\overline{X}$, is a possible representation space of $X'$, the associate space of $X$ (see also \cite[Chapter~2, Theorem~4.10]{BennettSharpley88} for the classical case of r.i.~Banach function spaces). As the local component of $\overline{X}$ is $L^1$, the local component of its associate space is $(L^1)' = L^{\infty}$ (this can be shown in a manner similar to the proof of \cite[Theorem~3.5]{Pesa22}). Hence, this approach leads naturally to a representation of $X'$ for which Theorem~\ref{TheoremRepreACqN} does not hold.

An immediate consequence of Theorem~\ref{TheoremRepreACqN} is that for r.i.~quasi-Banach function space $X$, the set $X_a$ is an order ideal with respect to the pointwise ordering of rearrangements.

\begin{corollary} \label{CorollaryACRI}
	Let $\lVert \cdot \rVert_X$ be an r.i.~quasi-Banach function norm and let $X$ be the corresponding r.i.~quasi-Banach function space. Assume that $f, g \in X$ satisfy
	$f^* \leq g^*$ $\lambda$-a.e.~and $g \in X_a$. Then also $f \in X_a$.
\end{corollary}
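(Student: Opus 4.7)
The plan is to reduce the statement to the representation space via Theorem~\ref{TheoremRepreACqN}, where the hypothesis $f^* \leq g^*$ becomes an honest pointwise bound and can be handled by the standard lattice argument.

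More precisely, I would first observe the following general fact, valid in any quasi-Banach function space $Y$: if $h, k \in \mathcal{M}$ satisfy $\lvert h \rvert \leq \lvert k \rvert$ $\nu$-a.e.\ and $k \in Y_a$, then $h \in Y_a$. Indeed, for any sequence $E_n$ with $\chi_{E_n} \to 0$ $\nu$-a.e., property \ref{P2} gives $\lVert h \chi_{E_n} \rVert_Y \leq \lVert k \chi_{E_n} \rVert_Y \to 0$. Applying this to the representation space $\overline{X}$ (which is a quasi-Banach function space by Proposition~\ref{PropRepreFixed}) with $h = f^*$ and $k = g^*$ shows that $g^* \in (\overline{X})_a$ implies $f^* \in (\overline{X})_a$.

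It then remains to invoke Theorem~\ref{TheoremRepreACqN} twice. By hypothesis $g \in X_a$, so the theorem yields $g^* \in (\overline{X})_a$. The observation above then gives $f^* \in (\overline{X})_a$, provided $f \in X$; but the latter is immediate, since $\lVert f \rVert_X = \lVert f^* \rVert_{\overline{X}} \leq \lVert g^* \rVert_{\overline{X}} = \lVert g \rVert_X < \infty$ by property \ref{P2} of $\lVert \cdot \rVert_{\overline{X}}$ and Proposition~\ref{PropRepreFixed}. A second application of Theorem~\ref{TheoremRepreACqN} converts $f^* \in (\overline{X})_a$ back into $f \in X_a$, as desired.

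There is no substantive obstacle; the entire content of the corollary has been packed into Theorem~\ref{TheoremRepreACqN}, which already encodes the bridge between absolute continuity in $X$ and absolute continuity of the rearrangement in $\overline{X}$. Once that bridge is in place, the weaker ordering $f^* \leq g^*$ on $(\mathcal R, \mu)$ is promoted to a genuine pointwise ordering on $([0,\infty), \lambda)$, where the lattice property \ref{P2} trivially transports absolute continuity.
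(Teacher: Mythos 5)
Your proof is correct and follows exactly the paper's route: the paper's entire proof reads ``One only has to combine Theorem~\ref{TheoremRepreACqN} with the property \ref{P2} of $\lVert \cdot \rVert_{\overline{X}}$,'' and your write-up simply makes explicit the lattice argument and the two applications of the representation theorem that this one-liner compresses. No gaps; the membership check $f \in X$ is even redundant, since it is already a hypothesis of the corollary.
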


\begin{proof}
	One only has to combine Theorem~\ref{TheoremRepreACqN} with the property \ref{P2} of $\lVert \cdot \rVert_{\overline{X}}$.
\end{proof}

A similar statement also holds for the Hardy--Littlewood--P\'{o}lya relation for those r.i.~quasi-Banach function norms for which the Hardy--Littlewood--P\'{o}lya principle holds. The proof is more interesting, however.

\begin{theorem} \label{TheoremACHLP}
	Let $\lVert \cdot \rVert_X$ be an r.i.~quasi-Banach function norm for which the Hardy--Littlewood--P\'{o}lya principle holds and let $X$ be the corresponding r.i.~quasi-Banach function space. Assume that $f, g \in X$ satisfy
	$f \prec g$ and $g \in X_a$. Then also $f \in X_a$.
\end{theorem}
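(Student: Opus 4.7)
By combining Theorem~\ref{TheoremRepreACqN} and Proposition~\ref{PropRepreHLP} with the fact that $f\prec g$ is equivalent to $f^*\prec g^*$, the claim reduces to the setting where $(\mathcal{R},\mu)=([0,\mu(\mathcal{R})),\lambda)$ and both $f=f^*$ and $g=g^*$ are non-increasing, while the HLP principle persists for the representation quasinorm. The cornerstone I would establish is the auxiliary majorization $f\chi_E\prec g\chi_{[0,\lambda(E))}$ valid for every measurable $E\subseteq[0,\mu(\mathcal{R}))$. To see it, the Hardy--Littlewood inequality yields $\int_0^t(f\chi_E)^*\,d\lambda\leq\int_0^{\min(t,\lambda(E))}f^*\,d\lambda\leq\int_0^{\min(t,\lambda(E))}g^*\,d\lambda=\int_0^t(g\chi_{[0,\lambda(E))})^*\,d\lambda$, where the middle step uses $f\prec g$. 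The HLP principle then gives $\lVert f\chi_E\rVert_X\leq\lVert g\chi_{[0,\lambda(E))}\rVert_X$.

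Given any sequence $E_k$ with $\chi_{E_k}\to 0$ $\lambda$-a.e., I would split $E_k=(E_k\cap[0,R])\cup(E_k\cap(R,\mu(\mathcal{R})))$ for a parameter $R$ to be sent to infinity. For each fixed $R$, the Lebesgue dominated convergence theorem forces $\lambda(E_k\cap[0,R])\to 0$ as $k\to\infty$, so the auxiliary estimate together with $g\in X_a$ immediately gives $\lVert f\chi_{E_k\cap[0,R]}\rVert_X\to 0$. The quasi-triangle inequality therefore reduces the whole statement to the tail estimate $\lVert f\chi_{(R,\mu(\mathcal{R}))}\rVert_X\to 0$ as $R\to\infty$ (which is automatic when $\mu(\mathcal{R})<\infty$).

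The tail is where I expect the main obstacle to lie. Since $(f\chi_{(R,\infty)})^*(t)=f(R+t)\leq g^{**}(R+t)$ and
\[
g^{**}(R+t)=\frac{R}{R+t}g^{**}(R)+\frac{t}{R+t}(g\chi_{(R,\infty)})^{**}(t),
\]
the shifted Hardy average of $g$ naturally splits into a constant piece that vanishes as $R\to\infty$ (because $g\in X_a$ forces $g^*(t)\to 0$ and hence $g^{**}(t)\to 0$) and a Hardy-transform-type piece governed by $g\chi_{(R,\infty)}$, whose $X$-norm tends to $0$ by $g\in X_a$. The subtle point is that $g^{**}$ itself need not belong to $X$ at all, so naive domination of $f\chi_{(R,\infty)}$ by $g^{**}(R+\cdot)$ is unavailable; instead the argument has to construct a non-increasing majorant of $(f\chi_{(R,\infty)})^*$ which both lies in $X$ and has $X$-norm tending to $0$ with $R$, to which the HLP principle can then be applied. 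I expect this construction, rather than any step relating $\prec$ to absolute continuity on its own, to constitute the technical heart of the argument. Once it is in place, taking first $k\to\infty$ and then $R\to\infty$ in the quasi-triangle inequality completes the proof.
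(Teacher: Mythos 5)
Your reduction is sound as far as it goes: the auxiliary majorization $f\chi_E\prec g\chi_{[0,\lambda(E))}$ is correct and, combined with the Hardy--Littlewood--P\'{o}lya principle and $g\in X_a$, it does dispose of the sets $E_k\cap[0,R]$; this is essentially the same splitting into a ``local'' and a ``global'' part that the paper isolates in Proposition~\ref{PropAmalAC}, and your treatment of the local half matches the paper's (which uses $f^*\chi_{[0,k^{-1})}\prec g^*\chi_{[0,k^{-1})}$ together with Proposition~\ref{PropRepreHLP}). However, the tail estimate $\lVert f^*\chi_{[R,\infty)}\rVert_{\overline{X}}\to 0$ is not proved in your proposal: you correctly observe that $f^*(R+t)\le g^{**}(R+t)$ and that $g^{**}(R+\cdot)$ is not a usable majorant (it need not lie in $X$, and neither does the piece $\tfrac{R}{R+t}g^{**}(R)$, which decays only like $t^{-1}$), but you then defer the resolution to an unspecified ``construction of a non-increasing majorant with small $X$-norm.'' No such direct construction is given, and it is precisely this step that carries the whole difficulty of the theorem; as written, the argument has a genuine gap.

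The paper closes this gap by a quite different, indirect route. Supposing $\lim_k\lVert f^*\chi_{[k,\infty)}\rVert_{\overline{X}}=\epsilon_0>0$, it first invokes Proposition~\ref{PropIncluCap} (via Theorem~\ref{TheoremNeccHLP} and \cite[Theorem~4.15]{MusilovaNekvinda24}) to conclude $f^*\chi_{[1,\infty)}\notin L^1$, and reduces to the case $g^*\chi_{[1,\infty)}\notin L^1$ as well. It then compares $f^*$ with $2g^*$ on tails: since $\lVert 2g^*\chi_{[k_0,\infty)}\rVert_{\overline{X}}<\epsilon_0\le\lVert f^*\chi_{[k_0,\infty)}\rVert_{\overline{X}}$ for large $k_0$, the contrapositive of the Hardy--Littlewood--P\'{o}lya principle produces points $T$ with $\int_{k_0}^{T}f^*\,d\lambda>\int_{k_0}^{T}2g^*\,d\lambda$, a continuation argument shows such $T$ exist arbitrarily far out, and the divergence of $\int g^*$ lets the extra factor $2$ absorb the finite head discrepancy $\int_0^{k_0}g^*-\int_0^{k_0}f^*$, contradicting $f\prec g$. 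If you want to complete your proof, you will need to replace your hoped-for majorant construction by an argument of this contradiction type (or something equivalent); the decomposition of $g^{**}(R+t)$ you propose does not by itself lead anywhere, for exactly the reason you flag.
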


To prove Theorem~\ref{TheoremACHLP}, we first need two statements that are of independent interest. The first one shows that in order to prove absolute continuity of a given function, one only needs to consider two types of sequences of the sets $E_k \subseteq [0,\infty)$ (applied onto the non-increasing rearrangement).

\begin{proposition} \label{PropAmalAC}
	Let $\lVert \cdot \rVert_X$ be an r.i.~quasi-Banach function norm and let $X$ be the corresponding r.i.~quasi-Banach function space. Let further $\lVert \cdot \rVert_{\overline{X}}$ be the r.i.~quasi-Banach function norm constructed in Definition~\ref{DefRepreFixed} and let $\overline{X}$ be the corresponding r.i.~quasi-Banach function space. Then $f \in X$ has an absolutely continuous quasinorm (i.e.~$f \in X_a$) if and only if it satisfies both
	\begin{align}
		\lim_{k \to \infty} \lVert f^* \chi_{[0,k^{-1})} \rVert_{\overline{X}} &= 0, \label{PropAmalAC:Eloc}\\
		\lim_{k \to \infty} \lVert f^* \chi_{[k, \infty)} \rVert_{\overline{X}} &= 0. \label{PropAmalAC:Eglob}
	\end{align}
\end{proposition}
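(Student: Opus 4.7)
The plan is to leverage Theorem~\ref{TheoremRepreACqN} to reduce the statement to showing that $f^* \in (\overline{X})_a$ if and only if the two displayed limits hold. The necessity direction is then essentially automatic, since the sequences $E_k = [0, k^{-1})$ and $E_k = [k, \infty)$ both satisfy $\chi_{E_k} \to 0$ $\lambda$-a.e., so applying the definition of absolute continuity to $f^*$ with these two sequences yields precisely \eqref{PropAmalAC:Eloc} and \eqref{PropAmalAC:Eglob}.

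The nontrivial direction is sufficiency. I fix an arbitrary sequence $E_k \subseteq [0, \infty)$ with $\chi_{E_k} \to 0$ $\lambda$-a.e.\ and aim to show $\lVert f^* \chi_{E_k} \rVert_{\overline{X}} \to 0$. The first preparatory step is to extract $\lim_{t \to \infty} f^*(t) = 0$ from \eqref{PropAmalAC:Eglob}: since $f^*$ is non-increasing, $f^*(k)\,\chi_{[k, k+1)} \leq f^* \chi_{[k, \infty)}$, so by rearrangement invariance one has $f^*(k)\,\lVert \chi_{[0,1)} \rVert_{\overline{X}} \leq \lVert f^* \chi_{[k, \infty)} \rVert_{\overline{X}} \to 0$, and $\lVert \chi_{[0,1)} \rVert_{\overline{X}} > 0$ by \ref{Q1b}. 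Having this decay, Proposition~\ref{PropACR} yields $(f^* \chi_{E_k})^*(t) \to 0$ for every $t \in (0, \infty)$.

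Next, by rearrangement invariance of $\lVert \cdot \rVert_{\overline{X}}$, I set $g_k = (f^* \chi_{E_k})^*$, so that $\lVert f^* \chi_{E_k} \rVert_{\overline{X}} = \lVert g_k \rVert_{\overline{X}}$, and I decompose
\begin{equation*}
g_k = g_k \chi_{[0, m^{-1})} + g_k \chi_{[m^{-1}, m]} + g_k \chi_{(m, \infty)}.
\end{equation*}
Iterating \ref{Q1c} and exploiting $g_k \leq f^*$ gives
\begin{equation*}
\lVert g_k \rVert_{\overline{X}} \leq C \lVert f^* \chi_{[0, m^{-1})} \rVert_{\overline{X}} + C^2\,g_k(m^{-1})\,\lVert \chi_{[m^{-1}, m]} \rVert_{\overline{X}} + C^2 \lVert f^* \chi_{(m, \infty)} \rVert_{\overline{X}},
\end{equation*}
where $C$ is the modulus of concavity of $\lVert \cdot \rVert_{\overline{X}}$ and the middle bound uses monotonicity of $g_k$. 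The factor $\lVert \chi_{[m^{-1}, m]} \rVert_{\overline{X}}$ is finite by \ref{P4}. Given $\varepsilon > 0$, I choose $m$ so large that the first and third summands combined are below $\varepsilon/2$ (using the two hypotheses), and then, with that $m$ fixed, choose $k$ so large that $g_k(m^{-1}) < \varepsilon/(2 C^2 \lVert \chi_{[m^{-1}, m]} \rVert_{\overline{X}})$.

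The main obstacle, and the reason one cannot split $f^* \chi_{E_k}$ directly instead of its rearrangement, is the middle region. A naive bound of the form $\lVert f^* \chi_{E_k \cap [m^{-1}, m]} \rVert_{\overline{X}} \leq f^*(m^{-1})\,\lVert \chi_{E_k \cap [m^{-1}, m]} \rVert_{\overline{X}}$ would require $\lVert \chi_{[0, \alpha)} \rVert_{\overline{X}} \to 0$ as $\alpha \to 0^+$, which fails for $\overline{X} = L^\infty$ and more generally whenever $L^\infty$ appears as the local component of the representation. Passing instead to $g_k$ and invoking Proposition~\ref{PropACR} replaces the troublesome quantity $\lVert \chi_{E_k \cap [m^{-1}, m]} \rVert_{\overline{X}}$ by the pointwise convergence $g_k(m^{-1}) \to 0$ against a fixed, finite factor $\lVert \chi_{[m^{-1}, m]} \rVert_{\overline{X}}$, which is what makes the argument go through for every r.i.~quasi-Banach function space.
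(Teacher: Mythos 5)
Your proof is correct, and its overall skeleton matches the paper's: both directions are reduced via Theorem~\ref{TheoremRepreACqN} to a statement about $f^*$ over $([0,\infty),\lambda)$, necessity is read off from the two particular sequences, and sufficiency splits a general $E_k$ into a part near zero and a part near infinity. Where you genuinely diverge is in the mechanism for the bulk of $E_k$. The paper uses a two-piece splitting $E_k = (E_k\cap[0,k_0]) \cup (E_k\cap(k_0,\infty))$: the tail is absorbed by \eqref{PropAmalAC:Eglob}, and for the finite-measure piece it observes that $\lambda(E_k\cap[0,k_0])\to 0$, so that after rearranging, $(f^*\chi_{E_k\cap[0,k_0]})^* \leq f^*\chi_{[0,k_1^{-1})}$ for large $k$ and \eqref{PropAmalAC:Eloc} applies directly; Proposition~\ref{PropACR} is never invoked. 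You instead rearrange first, setting $g_k=(f^*\chi_{E_k})^*$, split $[0,\infty)$ into three fixed bands, and kill the middle band $[m^{-1},m]$ by the pointwise convergence $g_k(m^{-1})\to 0$ supplied by Proposition~\ref{PropACR} against the fixed finite factor $\lVert\chi_{[m^{-1},m]}\rVert_{\overline{X}}$ from \ref{P4}. Both arguments are valid and of comparable length; the paper's is slightly more economical (one fewer application of the quasi-triangle inequality, no need to first extract $f^*(t)\to 0$ from \eqref{PropAmalAC:Eglob}), while yours isolates more explicitly why a naive pointwise splitting of $f^*\chi_{E_k}$ fails when $L^\infty$ is the local component --- a point the paper only makes implicitly by passing to $(f^*\chi_{\widetilde{E_k}})^*$ and using rearrangement invariance together with \ref{P2}.
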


\begin{proof}
	The necessity follows directly from Theorem~\ref{TheoremRepreACqN}. The same theorem also implies that the sufficiency will follow once we show that \eqref{PropAmalAC:Eloc} and \eqref{PropAmalAC:Eglob} together imply $f^* \in \left ( \overline{X} \right )_a$.
	
	Let $E_k \subseteq [0, \infty)$ be arbitrary such that $\chi_{E_k} \to 0$ $\lambda$-a.e.~and fix $\varepsilon > 0$. By \eqref{PropAmalAC:Eglob}, there is a $k_0$ such that 
	\begin{equation} \label{PropAmalAC:E1}
		\lVert f^* \chi_{[k_0, \infty)} \rVert_{\overline{X}} < \varepsilon.
	\end{equation}
	Put $\widetilde{E_k} = E_k \cap [0, k_0]$. Then $\lambda(\widetilde{E_k}) < \infty$ and thus the continuity of measure implies $\lambda(\widetilde{E_k}) \to 0$ as $k\to \infty$. Hence, using \eqref{PropAmalAC:Eloc} to find $k_1$ such that
	\begin{equation*}
		\lVert f^* \chi_{[0, k_1^{-1})} \rVert_{\overline{X}} < \varepsilon
	\end{equation*}
	and then finding $k_2$ such that for every $k \geq k_2$ we have $\lambda(\widetilde{E_k}) \leq k_1^{-1}$, we obtain from the rearrangement-invariance of $\lVert \cdot \rVert_{\overline{X}}$ and its property \ref{P2} that it holds for every such $k$ that	
	\begin{equation} \label{PropAmalAC:E2}
		\lVert f^* \chi_{\widetilde{E_k}} \rVert_{\overline{X}} = \lVert (f^* \chi_{\widetilde{E_k}})^* \rVert_{\overline{X}} \leq \lVert f^* \chi_{[0,k_1^{-1})} \rVert_{\overline{X}} < \varepsilon.
	\end{equation}
	Finally, by combining \eqref{PropAmalAC:E1} and \eqref{PropAmalAC:E2} with the properties \ref{Q1} and \ref{P2} of $\lVert \cdot \rVert_{\overline{X}}$, we get
	\begin{equation*}
		\lVert f^* \chi_{{E_k}} \rVert_{\overline{X}} \leq C_{\overline{X}} (\lVert f^* \chi_{\widetilde{E_k}} \rVert_{\overline{X}} + \lVert f^* \chi_{[k_0, \infty)} \rVert_{\overline{X}}) \leq 2C_{\overline{X}} \varepsilon
	\end{equation*}
	for every $k \geq k_2$.
\end{proof}

Let us note that when $(\mathcal{R}, \mu)$ is completely atomic, then
\begin{enumerate}
	\item the necessity of \eqref{PropAmalAC:Eloc} depends on the proper choice of the representation functional (by the same reasoning as in Theorem~\ref{TheoremRepreACqN}),
	\item for the functional $\lVert \cdot \rVert_{\overline{X}}$ we have constructed in Definition~\ref{DefRepreFixed} the condition \eqref{PropAmalAC:Eloc} holds trivially.
\end{enumerate}
Further, when $\mu(\mathcal{R}) < \infty$ then \eqref{PropAmalAC:Eglob} holds trivially.

We now move to the second statement we require, which shows that for r.i.~quasi-Banach function spaces $X$ that are sufficiently large the subspace $X_a$ contains the space $L^1 \cap L^{\infty}$.

\begin{proposition} \label{PropIncluCap}
	Let $\lVert \cdot \rVert_X$ be an r.i.~quasi-Banach function norm and let $X$ be the corresponding r.i.~quasi-Banach function space. Let further $\lVert \cdot \rVert_{\overline{X}}$ be the r.i.~quasi-Banach function norm constructed in Definition~\ref{DefRepreFixed} and let $\overline{X}$ be the corresponding r.i.~quasi-Banach function space. Assume that $L^1 \cap L^{\infty} \hookrightarrow X$ and that there is some $f_0 \in \overline{X}$ for which $f_0^*(0) = \infty$. Then $L^1 \cap L^{\infty} \subseteq X_a$.
\end{proposition}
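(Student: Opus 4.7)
The plan is to reduce the claim to verifying the two conditions of Proposition~\ref{PropAmalAC} for every $f \in L^1 \cap L^\infty$. Before doing so, I need the preliminary observation that the hypothesis $L^1 \cap L^\infty \hookrightarrow X$ transfers to an embedding $L^1 \cap L^\infty \hookrightarrow \overline{X}$ on $([0,\infty),\lambda)$. This is because the $L^1 \cap L^\infty$-norm given by \eqref{DefIntL1Infty} depends only on the non-increasing rearrangement and the construction in Definition~\ref{DefRepreFixed} applies a fixed operator $T$ to $g^*\chi_{[0,\mu(\mathcal{R}))}$; a direct calculation splitting into the two cases of Definition~\ref{DefRepreFixed} shows that $T(g^*\chi_{[0,\mu(\mathcal{R}))})$ always lies in $L^1 \cap L^\infty$ on $(\mathcal{R},\mu)$ with $L^\infty$- and $L^1$-norms bounded by $g^*(0)$ and $\|g^*\|_1$ respectively.

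The global condition \eqref{PropAmalAC:Eglob} then follows quickly. For any $f \in L^1 \cap L^\infty$ the above embedding gives
\[
\|f^*\chi_{[k,\infty)}\|_{\overline{X}} \lesssim \|f^*\chi_{[k,\infty)}\|_{L^1 \cap L^\infty} = f^*(k) + \int_k^\infty f^*\,d\lambda,
\]
and both summands tend to zero as $k \to \infty$: the estimate $f^*(k) \leq k^{-1}\|f\|_1$ handles the first, and the tail integral vanishes because $f^* \in L^1$.

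The local condition \eqref{PropAmalAC:Eloc} is the main obstacle and is precisely where the hypothesis on $f_0$ enters. Since $\overline{X}$ is an r.i.~quasi-Banach function space, $f_0 \in \overline{X}$ is $\lambda$-a.e.~finite and hence $f_0^*(t) < \infty$ for every $t > 0$; combined with $f_0^*(0) = \infty$, the quantities $a_M := \lambda(\{f_0^* > M\})$ form a family of strictly positive numbers with $a_M \to 0$ as $M \to \infty$. The pointwise bound $f_0^* \geq M\chi_{[0,a_M)}$ together with axioms \ref{Q1a} and \ref{P2} of $\|\cdot\|_{\overline{X}}$ yields $\|\chi_{[0,a_M)}\|_{\overline{X}} \leq M^{-1}\|f_0\|_{\overline{X}}$, which forces $\|\chi_{[0,s)}\|_{\overline{X}} \to 0$ as $s \to 0^+$ (given $\varepsilon > 0$, any $s \leq a_M$ with $M$ chosen so $M^{-1}\|f_0\|_{\overline{X}} < \varepsilon$ works). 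Since $f^*(0) = \|f\|_\infty < \infty$ for $f \in L^1 \cap L^\infty$, the monotonicity of $\|\cdot\|_{\overline{X}}$ gives $\|f^*\chi_{[0,k^{-1})}\|_{\overline{X}} \leq f^*(0)\|\chi_{[0,k^{-1})}\|_{\overline{X}} \to 0$, which is \eqref{PropAmalAC:Eloc}, and Proposition~\ref{PropAmalAC} then concludes the argument. The reason the $f_0$ hypothesis is indispensable is that without it nothing prevents $\|\cdot\|_{\overline{X}}$ from behaving like $L^\infty$ near the origin (as indeed happens for $X = L^\infty$), in which case $\|\chi_{[0,s)}\|_{\overline{X}}$ does not vanish and the local condition genuinely fails.
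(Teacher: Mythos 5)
Your proof is correct and follows the same overall strategy as the paper's, namely reducing the claim to the two conditions \eqref{PropAmalAC:Eloc} and \eqref{PropAmalAC:Eglob} of Proposition~\ref{PropAmalAC}, but the execution of both halves differs in a meaningful way. For the global condition, the paper establishes the embedding inequality only for non-increasing rearrangements of functions on $(\mathcal{R},\mu)$ and must therefore verify that the shifted tail $t \mapsto f^*(t+\beta k)$ is such a rearrangement; this is precisely what forces the restriction to multiples of the atom size $\beta$ in the completely atomic case. You instead upgrade the hypothesis to a genuine embedding $L^1 \cap L^{\infty} \hookrightarrow \overline{X}$ of spaces over $([0,\infty),\lambda)$ by estimating $\lVert T(g^*\chi_{[0,\mu(\mathcal{R}))}) \rVert_{L^1 \cap L^{\infty}}$ in both cases of Definition~\ref{DefRepreFixed} (in the non-atomic case $T$ preserves rearrangements, in the atomic case the averaging does not increase either the $L^1$ or the $L^\infty$ norm), which lets you apply the bound directly to $f^*\chi_{[k,\infty)}$ and avoids the $\beta$-shift detour entirely; this is valid and arguably cleaner, though the ``direct calculation'' you gesture at is exactly where the paper's case distinction resurfaces, so it deserves to be written out. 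For the local condition, where the paper simply cites \cite[Theorem~4.15]{MusilovaNekvinda24} to get $\lVert \chi_{[0,k^{-1})} \rVert_{\overline{X}} \to 0$ from the existence of $f_0$, you reprove the implication from scratch via the level sets $a_M = \lambda(\{f_0^* > M\})$ and the bound $\lVert \chi_{[0,a_M)} \rVert_{\overline{X}} \leq M^{-1}\lVert f_0 \rVert_{\overline{X}}$, a correct and self-contained substitute for the citation. In short: same skeleton, but your transfer of the embedding to the half-line is a genuinely different (and slightly more general) intermediate lemma, at the cost of an extra computation the paper dodges by working only with rearrangements.
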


In the terminology of \cite{Pesa22} (and in light of the results presented there), the assumptions of the proposition can be formulated as the global component of $X$ being weaker that that of $L^1$ while its local component is supposed to be strictly weaker than that of $L^{\infty}$. Note also that the latter assumption is always satisfied when $(\mathcal{R}, \mu)$ is completely atomic and also that the embedding $L^1 \cap L^{\infty} \hookrightarrow X$ is in fact equivalent to the set-theoretical inclusion $L^1 \cap L^{\infty} \subseteq X$ (see \cite[Corollary~3.10]{NekvindaPesa24}).

\begin{proof}	
	We will use Proposition~\ref{PropAmalAC} and show in turn that \eqref{PropAmalAC:Eloc} and \eqref{PropAmalAC:Eglob} hold for every $f \in L^1 \cap L^{\infty}$ (which is fixed from now on; we also stress that $f \in \mathcal{M}(\mathcal{R}, \mu)$).
		
	We start with \eqref{PropAmalAC:Eloc} which is simpler, as it follows from \cite[Theorem~4.15]{MusilovaNekvinda24} (applied on $\overline{X}$) that the existence of $f_0 \in \overline{X}$ for which $f_0^*(0) = \infty$ implies that
	\begin{equation*}
		\lim_{k \to \infty} \lVert f^* \chi_{[0,k^{-1})} \rVert_{\overline{X}} \leq  f^*(0)\lim_{k \to \infty} \lVert \chi_{[0,k^{-1})} \rVert_{\overline{X}} = 0.
	\end{equation*}
	
	As for \eqref{PropAmalAC:Eglob}, the first step is to observe that if follows from our assumption $L^1 \cap L^{\infty} \hookrightarrow X$ that we have for every $g \in \mathcal{M}(\mathcal{R}, \mu)$ that
	\begin{equation} \label{PropIncluCap:E1}
		\lVert g^* \rVert_{\overline{X}} = \lVert g \rVert_{X} \lesssim \lVert g \rVert_{L^1 \cap L^{\infty}} \approx \max \left \{ g^*(0), \; \lVert g^* \rVert_{L^1} \right \}, 
	\end{equation}
	where the norm $\lVert \cdot \rVert_{L^1}$ is considered over $([0, \infty), \lambda)$ (we use the equivalent definition of $\lVert \cdot \rVert_{L^1 \cap L^{\infty}}$ presented as \eqref{DefIntL1Infty}). 
	
	We now define the number $\beta \in (0, \infty)$ by
	\begin{equation*}
		\beta = \begin{cases}
			\mu(e) & \text{when $(\mathcal{R}, \mu)$ is completely atomic and $e$ is an arbitrary atom,} \\
			1 & \text{when $(\mathcal{R}, \mu)$ is non-atomic.}
		\end{cases}
	\end{equation*}
	Note that $\beta$ is well defined (since we assume that $(\mathcal{R}, \mu)$ is resonant) and that this definition of $\beta$ extend that employed in Definition~\ref{DefRepreFixed}. Also, it follows from the property \ref{P2} of $\lVert \cdot \rVert_{\overline{X}}$ that
	\begin{equation*}
		\lim_{k \to \infty} \lVert f^* \chi_{[k, \infty)} \rVert_{\overline{X}} = \lim_{k \to \infty} \lVert f^* \chi_{[\beta k, \infty)} \rVert_{\overline{X}},
	\end{equation*}
	so it suffices to examine the limit on the right-hand side. 
	
	Next, we note that $t \mapsto (f^* \chi_{[\beta k, \infty)})(t + \beta k)$ is the non-increasing rearrangement of some function in $\mathcal{M}(\mathcal{R}, \mu)$. Indeed, since $f$ itself belongs to $\mathcal{M}(\mathcal{R}, \mu)$, we may construct this function by applying either the operator $T_{\sigma}$ or $T$ from Definition~\ref{DefRepreFixed}, depending on whether the underlying measure space is atomic or not.
	
	Hence, we may apply \eqref{PropIncluCap:E1} onto $f^* \chi_{[\beta k, \infty)}$ to obtain
	\begin{equation*}
		\lVert f^* \chi_{[\beta k, \infty)} \rVert_{\overline{X}} \lesssim \max \left \{ f^*(\beta k), \; \lVert f^* \chi_{[\beta k, \infty)} \rVert_{L^1} \right \}.
	\end{equation*}
	As we assume $f \in L^1$ and it is well known that $L^1_a = L^1$, it is clear that the right-hand side converges to zero as $k \to \infty$.
\end{proof}

We are now suitably equipped to prove our result.

\begin{proof}[Proof of Theorem~\ref{TheoremACHLP}]
	
	As follows from Proposition~\ref{PropAmalAC}, we need to show that $f$ (fixed in the statement of the Theorem) satisfies \eqref{PropAmalAC:Eloc} and \eqref{PropAmalAC:Eglob}.
	
	\eqref{PropAmalAC:Eloc} is simpler, as $f \prec g$ easily implies $f^* \chi_{[0,k^{-1})} \prec g^* \chi_{[0,k^{-1})}$ for all $k$, which is sufficient thanks to Proposition~\ref{PropRepreHLP}.
	
	As for \eqref{PropAmalAC:Eglob}, we assume the contrary, i.e.~that we have a pair of functions $f, g \in X$ such that $g \in X_a$ while
	\begin{align}
		\lim_{k \to \infty} \lVert f^* \chi_{[k, \infty)} \rVert_{\overline{X}} &= \epsilon_0 > 0, \label{TheoremACHLP:E1}
	\end{align}
	and show that this implies $f \not \prec g$.
	
	We assume that $g$ is not the zero function in $X$, as otherwise the conclusion is immediate. In this case we have $X_a \neq \{0\}$, hence it follows from \cite[Theorem~4.15]{MusilovaNekvinda24} that there exists a function $f_0 \in X$ with $f^*(0) = \infty$. Additionally, as the Hardy--Littlewood--P\'{o}lya principle is assumed to hold for $\lVert \cdot \rVert_X$, Proposition~\ref{TheoremNeccHLP} implies $L^1 \cap L^{\infty} \hookrightarrow X$. Therefore, Proposition~\ref{PropIncluCap} together with Proposition~\ref{PropAmalAC}, \eqref{TheoremACHLP:E1}, and \eqref{DefIntL1Infty} imply that $f^* \chi_{[1, \infty)} \notin L^1$ (we have $f^*(1) < \infty$, as follows from $f \in X \hookrightarrow \mathcal{M}_0$, see \cite[Theorem~3.4]{NekvindaPesa24}). Consequently, we may assume that also $g^* \chi_{[1, \infty)} \notin L^1$, because otherwise we would get
	\begin{align*}
		\lim_{k \to \infty} \int_0^{k} f^* \: d\lambda &= \infty, \\
		\lim_{k \to \infty} \int_0^{k} g^* \: d\lambda &< \infty,
	\end{align*}
	which immediately yields $f \not \prec g$. Note that the second estimate employs Theorem~\ref{TheoremNeccHLP}, as this Theorem implies that $\lVert \cdot \rVert_X$ has the property \ref{P5} (see \cite[Theorem~5.7]{Pesa22}, or alternatively \cite[Lemma~2.24]{Pesa22}).
	
	Now, since Proposition~\ref{PropAmalAC} implies for our $g \in X_a$ that
	\begin{equation} \label{TheoremACHLP:E2}
				\lim_{k \to \infty} \lVert g^* \chi_{[k, \infty)} \rVert_{\overline{X}} = 0,
	\end{equation}
	we see that there is a $k_0 \in \mathbb{N}$ such that $2\lVert g^* \chi_{[k_0, \infty)} \rVert_{\overline{X}} < \epsilon_0 \leq \lVert f^* \chi_{[k_0, \infty)} \rVert_{\overline{X}}$. Since the Hardy--Littlewood--P\'{o}lya principle holds for $\lVert \cdot \rVert_X$, this means that there exists a $T_0 \in (k_0, \infty)$ such that
	\begin{equation} \label{TheoremACHLP:E3}
		\int_{k_0}^{T_0} 2g^* \: d\lambda < \int_{k_0}^{T_0} f^* \: d\lambda.
	\end{equation}
	Now, setting
	\begin{equation*}
		\widetilde{T} = \sup \left\{ T \in (k_0, \infty); \; \int_{k_0}^{T} 2g^* \: d\lambda < \int_{k_0}^{T} f^* \: d\lambda \right\},
	\end{equation*}
	we observe from \eqref{TheoremACHLP:E3} that $\widetilde{T} \geq T_0$. Next step is to show that it is, in fact, infinite.
	
	Indeed, if $\widetilde{T}$ were finite, then we would necessarily have
	\begin{equation} \label{TheoremACHLP:E4}
		\int_{k_0}^{\widetilde{T}} 2g^* \: d\lambda = \int_{k_0}^{\widetilde{T}} f^* \: d\lambda.
	\end{equation}
	However, as $\lVert 2g^* \chi_{[\widetilde{T}, \infty)} \rVert_{\overline{X}} \leq 2\lVert g^* \chi_{[k_0, \infty)} \rVert_{\overline{X}} < \epsilon_0 \leq \lVert f^* \chi_{[\widetilde{T}, \infty)} \rVert_{\overline{X}}$, we can find some $T  \in (\widetilde{T}, \infty)$ such that
	\begin{equation} \label{TheoremACHLP:E5}
		\int_{\widetilde{T}}^{T} 2g^* \: d\lambda < \int_{\widetilde{T}}^{T} f^* \: d\lambda.
	\end{equation}
	Putting \eqref{TheoremACHLP:E4} and \eqref{TheoremACHLP:E5} together we obtain a contradiction with the definition of $\widetilde{T}$.
	
	Having established that $\widetilde{T} = \infty$, we combine this fact with our assumption that $g^* \chi_{[1, \infty)} \notin L^1$ to observe that there exists an $T \in (k_0, \infty)$ for which
	\begin{equation} \label{TheoremACHLP:E7}
		\int_{k_0}^T f^* \: d\lambda > \int_{k_0}^T 2g^* \: d\lambda \geq \left(\int_0^{k_0} g^* \: d\lambda - \int_0^{k_0} f^* \: d\lambda \right) + \int_{k_0}^T g^* \: d\lambda.
	\end{equation}
	Here, we make no claim on the sign of the difference in the parentheses on the right-hand side; the only important thing is that it is finite, which follows from the fact that $\lVert \cdot \rVert_X$ has the property \ref{P5} (as discussed above). In fact, if the difference were negative, then we would immediately have $f \not \prec g$; our construction deals with the more interesting case when it is positive.
	
	Finally, it follows from \eqref{TheoremACHLP:E7} that $T$ satisfies
	\begin{equation*}
		\int_0^T f^* \: d\lambda - \int_0^T g^* \: d\lambda \geq \left( \int_{k_0}^T f^* \: d\lambda - \int_{k_0}^T g^* \: d\lambda \right) - \left( \int_0^{k_0} g^* \: d\lambda - \int_0^{k_0} f^* \: d\lambda \right) > 0,
	\end{equation*}
	which shows that $f \not \prec g$.
\end{proof}

We conclude by a corollary of Proposition~\ref{PropAmalAC} which characterises the absolute continuity of the quasinorm in the weak Marcinkiewicz spaces. For an in depth treatment of said spaces we refer the reader to \cite[Section~4]{MusilovaNekvinda24}, here we present only a simple formulation of their definition (which, however, does not lead to any loss of generality). For simplicity and in accordance with the approach taken in \cite{MusilovaNekvinda24} we also restrict ourself to the case of non-atomic infinite measure.

\begin{definition}
	Let $(\mathcal{R}, \mu)$ be non-atomic such that $\mu(\mathcal{R}) = \infty$, let $\lVert \cdot \rVert_X$ be an r.i.~quasi-Banach function norm and $X$ the corresponding r.i.~quasi-Banach function space, and denote by $\varphi_X : [0, \infty) \to [0, \infty)$ the fundamental function of $X$, that is
	\begin{align*}
		\varphi_X(t) &= \lVert \chi_{E_t} \rVert_X, & \text{ for } t \in [0, \infty),
	\end{align*}
	where $E_t \subseteq \mathcal{R}$ is an arbitrary set satisfying $\mu(E_t) = t$.
	
	Then the weak Marcinkiewicz space $m_{\varphi_X}$ is the r.i.~quasi-Banach function space defined via the functional
	\begin{align*}
		\lVert f \rVert_{m_{\varphi_X}} = \sup_{t \in [0, \infty)} \varphi_X(t) f^*(t).
	\end{align*}
\end{definition}

Our construction of the weak Marcinkiewicz space via a fundamental function is chosen for simplicity and briefness as it saves us the need to discuss what functions lead to meaningful functionals (which has been done in \cite[Section~4]{MusilovaNekvinda24}). There is no loss of generality, since the property of being a fundamental function of some r.i.~quasi-Banach function space characterises those functions.

\begin{corollary} \label{CorollaryACwM}
	For the weak Marcinkiewicz space $m_{\varphi_X}$ as defined above the subspace $(m_{\varphi_X})_a$ can be characterised as
	\begin{equation*}
		(m_{\varphi_X})_a = \left \{ f \in m_{\varphi_X}; \; \lim_{t \to 0} \varphi_X(t) f^*(t) = \lim_{t \to \infty} \varphi_X(t) f^*(t) = 0 \right \}.
	\end{equation*}
	
	Specifically, the weak Marcinkiewicz space $m_{\varphi_X}$ does not have absolutely continuous quasinorm (for any valid choice of $\varphi_X$).
\end{corollary}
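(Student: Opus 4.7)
The plan is to apply Proposition~\ref{PropAmalAC}. Since $(\mathcal{R}, \mu)$ is non-atomic with $\mu(\mathcal{R}) = \infty$, the construction of Definition~\ref{DefRepreFixed}~\ref{DefRepreFixed_i} gives the explicit identification $\lVert g \rVert_{\overline{m_{\varphi_X}}} = \sup_{t \geq 0} \varphi_X(t) g^*(t)$ for every $g \in \mathcal{M}([0, \infty), \lambda)$, because the operator $T$ is measure-preserving and hence preserves rearrangements. It thus remains to translate \eqref{PropAmalAC:Eloc} and \eqref{PropAmalAC:Eglob} into the pointwise vanishing of $\varphi_X(t) f^*(t)$ at $0$ and at $\infty$ respectively.

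For the local condition, since $f^* \chi_{[0, k^{-1})}$ is itself non-increasing, one has directly
\[
\lVert f^* \chi_{[0, k^{-1})} \rVert_{\overline{m_{\varphi_X}}} = \sup_{t \in [0, k^{-1})} \varphi_X(t) f^*(t),
\]
whose vanishing as $k \to \infty$ is immediately equivalent to $\lim_{t \to 0^+} \varphi_X(t) f^*(t) = 0$. For the global condition, a short distribution-function computation yields $(f^* \chi_{[k, \infty)})^*(t) = f^*(t + k)$, so
\[
\lVert f^* \chi_{[k, \infty)} \rVert_{\overline{m_{\varphi_X}}} = \sup_{t \geq 0} \varphi_X(t) f^*(t + k).
\]
Monotonicity of $\varphi_X$ (from property \ref{P2}) gives $\sup_{t \geq 0} \varphi_X(t) f^*(t+k) \leq \sup_{s \geq k} \varphi_X(s) f^*(s)$, establishing one direction. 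For the converse, I would invoke Theorem~\ref{TDRIS} applied to $\chi_{[0, s)}$ to obtain the doubling estimate $\varphi_X(2s) \leq C \varphi_X(s)$, which yields
\[
\varphi_X(s) f^*(s) \leq C \varphi_X(s/2) f^*(s) \leq C \lVert f^* \chi_{[s/2, \infty)} \rVert_{\overline{m_{\varphi_X}}},
\]
forcing $\varphi_X(s) f^*(s) \to 0$ as $s \to \infty$. I expect this doubling step to be the main technical ingredient of the argument.

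For the ``specifically'' claim, it suffices to produce a single $f \in m_{\varphi_X} \setminus (m_{\varphi_X})_a$. The natural choice is $f^* = 1/\varphi_X$, which is positive and non-increasing on $(0, \infty)$ since $\varphi_X(t) > 0$ there by \ref{Q1b}; via the operator $T$ of Definition~\ref{DefRepreFixed}~\ref{DefRepreFixed_i} this function can be realized as the rearrangement of some $f \in \mathcal{M}(\mathcal{R}, \mu)$. Then $\varphi_X(t) f^*(t) \equiv 1$, so $f \in m_{\varphi_X}$, but $\lim_{t \to \infty} \varphi_X(t) f^*(t) = 1 \neq 0$, whence $f \notin (m_{\varphi_X})_a$ by the characterization just proved. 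Any minor right-continuity issue with $1/\varphi_X$ can be side-stepped by using instead a dyadic piecewise-constant majorant $f^*(t) = 1/\varphi_X(2^n)$ on $[2^n, 2^{n+1})$, appealing once more to doubling to verify membership in $m_{\varphi_X}$.
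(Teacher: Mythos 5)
Your proposal is correct, and for most of the statement it runs along the same lines as the paper: both proofs reduce everything to Proposition~\ref{PropAmalAC} after identifying $\overline{m_{\varphi_X}}$ with $m_{\varphi_X}$ over $([0,\infty),\lambda)$, both read off the sufficiency and the necessity of the vanishing at zero from the explicit formulas for $\lVert f^*\chi_{[0,k^{-1})}\rVert$ and $\lVert f^*\chi_{[k,\infty)}\rVert$, and both exhibit non-absolute-continuity via a function with $f^*=1/\varphi_X$. The one step where you genuinely diverge is the necessity of $\lim_{t\to\infty}\varphi_X(t)f^*(t)=0$. The paper does not pass through \eqref{PropAmalAC:Eglob} at all here: it truncates, setting $f_t=\min\{\lvert f\rvert, f^*(t)\}$, uses Proposition~\ref{PropACNimpliesACR} and the dominated convergence principle (Proposition~\ref{PropDomConv}) to get $\lVert f_t\rVert_{m_{\varphi_X}}\to 0$, and then reads off $\varphi_X(t)f^*(t)\le\lVert f_t\rVert_{m_{\varphi_X}}$ from $f_t^*=f^*(t)\chi_{[0,t)}+f^*\chi_{[t,\infty)}$. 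You instead start from \eqref{PropAmalAC:Eglob} and convert the norm decay into pointwise decay via the doubling estimate $\varphi_X(2s)\le C\varphi_X(s)$, obtained from the boundedness of the dilation operator. Both work; the paper's truncation argument is more elementary (no dilation machinery, and it works directly on the original measure space), while yours treats all four implications uniformly through Proposition~\ref{PropAmalAC}, at the price of the doubling lemma. One small technical point in your route: to get doubling for $\varphi_X$ itself rather than for $\sup_{t<s}\varphi_X(t)$, you should apply Theorem~\ref{TDRIS} to $\chi_{[0,s)}$ in $\overline{X}$ (the representation of the original space $X$, where $\lVert\chi_{[0,s)}\rVert_{\overline{X}}=\varphi_X(s)$ exactly), not in $m_{\varphi_X}$, where $\lVert\chi_{[0,s)}\rVert$ is only the left-continuous regularisation of $\varphi_X$; this is easily repaired and does not affect the argument. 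Your extra care about right-continuity of $1/\varphi_X$ in the final counterexample, with the dyadic piecewise-constant substitute, is a reasonable precaution that the paper's proof glosses over.
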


\begin{proof}
	We first note that since $(\mathcal{R}, \mu)$ is non-atomic and of infinite measure, we have $\overline{m_{\varphi_X}} = m_{\varphi_X}$ where the space on the right hand side is considered over $([0, \infty), \lambda)$.
	
	Sufficiency now follows directly from Proposition~\ref{PropAmalAC} (note that $\varphi_X$ is non-decreasing as follows from the property $\ref{P2}$ of $\lVert \cdot \rVert_X$, see also \cite[Corollary~3.5]{MusilovaNekvinda24}). So does necessity for the limit near zero. 
	
	It remains to prove that $f \in (m_{\varphi_X})_a$ implies
	\begin{equation} \label{CorollaryACwM:E1}
		\lim_{t \to \infty} \varphi_X(t) f^*(t) = 0, 
	\end{equation}
	so let us fix such $f$ and put
	\begin{align*}
		f_t &= \min \{ \lvert f \rvert, \, f^*(t)\} &\text{ on } \mathcal{R}.
	\end{align*}
	Using Proposition~\ref{PropACNimpliesACR} and Proposition~\ref{PropDomConv}, it is easy to check that
	\begin{equation} \label{CorollaryACwM:E2}
		\lim_{t \to \infty} \lVert f_t \rVert_{m_{\varphi_X}} = 0.
	\end{equation}
	Furthermore, it is not hard to verify that
	\begin{equation*}
		f_t^* = \min \{ f^*, \, f^*(t) \} = f^*(t) \chi_{[0,t)} + f^* \chi_{[t, \infty)},
	\end{equation*}
	whence we obtain
	\begin{equation*}
		\varphi_X(t) f^*(t) \leq \lVert f_t \rVert_{m_{\varphi_X}}.
	\end{equation*}
	By combining this estimate with \eqref{CorollaryACwM:E2}, we obtain \eqref{CorollaryACwM:E1}.
	
	Finally, our assumption that $(\mathcal{R}, \mu)$ is non-atomic guarantees  that there is some function $f \in \mathcal{M}(\mathcal{R}, \mu)$ such that $f^* = \frac{1}{\varphi_X}$. Since it is evident that $\frac{1}{\varphi_X} \in \overline{m_{\varphi_X}} \setminus (\overline{m_{\varphi_X}})_a$, Theorem~\ref{TheoremRepreACqN} implies $f \in m_{\varphi_X} \setminus (m_{\varphi_X})_a$.
\end{proof}

\bibliographystyle{dabbrv}
\bibliography{bibliography}
\end{document}